\newtheorem{Thm}{Theorem}[section]
\newtheorem{Def}[Thm]{Definition}
\newtheorem{Prop}[Thm]{Proposition}
\newtheorem{Lem}[Thm]{Lemma}
\newtheorem{Rem}[Thm]{Remark}
\theoremstyle{plain}
\newtheorem{introtheorem}{Theorem}
\DeclareMathOperator{\uE}{\underline{E\hspace{-.3ex}}}
\DeclareMathOperator{\Ko}{\operatorname{K}_0}
\DeclareMathOperator{\Kl}{\operatorname{K}_1}
\DeclareMathOperator{\KoB}{\operatorname{K}_0^{\text{$B$}}}
\DeclareMathOperator{\KoL}{\operatorname{K}_0^{\text{$\Gamma$}}}
\DeclareMathOperator{\KlL}{\operatorname{K}_1^{\text{$\Gamma$}}}
\DeclareMathOperator{\KlFn}{\operatorname{K}_1^{\text{$\F_n$}}}
\DeclareMathOperator{\KlZ}{\operatorname{K}_1^{\text{$\Z$}}}
\DeclareMathOperator{\HoF}{\operatorname{H}_0^{\mathfrak{F}}}
\DeclareMathOperator{\HlF}{\operatorname{H}_1^{\mathfrak{F}}}
\DeclareMathOperator{\Min}{\operatorname{Min}}
\newcommand{\N }{\mathbb N}
\newcommand{\Z }{\mathbb Z}
\newcommand{\R }{\mathbb R}
\newcommand{\C}{\mathbb{C}}
\newcommand{\F}{\mathbb {F}}
\newcommand{\ra }{\rightarrow}
\newcommand{\cdt}{\!\cdot\!}
\title{K-theory and K-homology of the  wreath products of finite with free groups}
\author{ Sanaz Pooya }
\begin{document}

\maketitle

\baselineskip=16pt

\unmarkedfntext{Keywords : Wreath product, Baum-Connes conjecture}
\unmarkedfntext{MSC classification: 46L80, 55R40}
\begin{abstract}
	 Consider the wreath product $\Gamma = F\wr \F_n = \bigoplus_{\F_n}F\rtimes\F_n$,  with $F$ a finite group and  $\F_n$ the free group on $n$ generators. We study the Baum-Connes conjecture for this group. Our aim is to explicitly describe the Baum-Connes assembly map  for $F\wr \F_n$. To this end, we compute the topological and the analytical K-groups and exhibit their generators. Moreover, we present a concrete 2-dimensional model for $\uE\Gamma$.
	As a result of our K-theoretic computations, we obtain that $\mathrm K_0(\mathrm C^*_{\mathrm r}(\Gamma))$ is the free abelian group of countable rank with  a basis consisting of projections in $\mathrm C^*_{\mathrm r}(\bigoplus_{\F_n}F)$ and $\mathrm K_1(\mathrm C^*_{\mathrm r}(\Gamma))$ is the free abelian group of rank $n$  with a basis consisting of the unitaries coming from the free group. 
\end{abstract}

\section{Introduction}
The Baum-Connes conjecture, at the fascinating intersection of several areas in mathematics, was stated in 1982 by P. Baum and A. Connes. 
 For a group $G$, it proposes a link via a certain assembly map between K-theory of the reduced group $C^*$-algebra $\mathrm C^*_{\mathrm r}(G)$ and the classifying space for proper actions $\uE G$. Formally, it  states that the assembly map 
$$\mu_{i}^{G}\colon \mathrm K_{i}^{G}(\uE G)\ra \mathrm K_i(\mathrm{C}^*_{\mathrm{r}}(G))\,\,\,\,\,\,\,\,\,\,i=0, 1$$ 
is an isomorphism of two abelian groups.

The conjecture has been verified in a variety of cases including the huge class of a-T-menable groups (e.g. amenable groups and free groups), due to Higson and Kasparov \cite{HK01}. Among many as yet unanswered questions about different aspects of this conjecture, we aim at understanding its stability under semidirect product. Indeed, we would like to understand the conjecture for a group $G= N\rtimes Q$  in terms of the status of the conjecture for $N$ and $Q$.
In this respect, at the very beginning of our way, we try to elucidate the isomorphism for some groups for which the conjecture is satisfied. We have started our investigation in \cite{PV16} and \cite{FPV16}. This work can be considered a generalisation of the latter. 

The group under consideration in this article is the wreath product $\Gamma = F\wr \F_n = \bigoplus_{\F_n}F\rtimes\F_n$, where $F$ is a finite group and $\F_n$ is the free group on $n$ generators. The group $F\wr \F_n$ is a-T-menable as the groups $F$ and $\F_n$ are so, see Theorem 1.1 in \cite{CSV12}. Therefore a result by Higson-Kasparov  in \cite{HK01}   guarantees that the conjecture holds for this group. The aim of the present article is to  describe this isomorphism explicitly. To this end, we compute all involved K-groups and find their generators. The computations on the topological side become possible, thanks to the existence of a 2-dimensional model for $\uE \Gamma$. We add that part of our explicit approach to the conjecture is to shed light on its topological side by providing a concrete 2-dimensional model. Lastly, we show that the assembly map sends  generators to generators, as desired. Therefore, the Baum-Connes conjecture is explicitly proved  for these groups.

Our main tool on the analytical side is the Pimsner-Voiculescu 6-term exact sequence and on the topological side the Mart\'inez spectral sequence. We note that the existence of a huge torsion subgroup led us to consider the equivariant K-homology  via its link to Bredon homology.
  
In order to formally state our main results, we fix some notation.   
Let $\text{Min\,} F$ denote the set of (Murray-von Neumann equivalence  classes of) minimal projections in $\mathrm C^*_{\mathrm r} (F)=\C F$ and let $\hat F$ denote the set of unitary equivalence classes of irreducible representations of $F$. Moreover, denote by $\text{Min\,} F^{(\F_n)}$ and by $\hat F ^{(\F_n)}$, respectively, the set of finitely supported maps from $\F_n$  to $\text{Min\,} F$ and to $\hat F$. \\
  In the theorems below, corresponding to Theorem \ref {Thm: RHS} and Theorem \ref{Thm: LHS}, respectively,  we describe the K-theory and K-homology of $F\wr \F_n$.
\begin{introtheorem}\label{Thm: RHS}
	Let $\Gamma = F \wr \F_n$ with $F$ a non-trivial finite group. Write $\F_n=\langle{a_1, \ldots, a_n}\rangle$. The K-groups of $\mathrm C^*_{\mathrm r}(\Gamma)$ can be described as two free abelian groups.
	\begin{description}
		\item $\Ko(\mathrm C^*_{\mathrm r}\Gamma) = \Z R$ with $R$ a countable basis indexed by representatives for $\F_n$-orbits in ${\operatorname{Min\,}} F ^{\,(\F_n)}$.
		\item $\Kl(\mathrm C^*_{\mathrm r} \Gamma) = \Z [a_{1}] \oplus \ldots\oplus \Z [a_{n}] $. 
	\end{description}	
\end{introtheorem}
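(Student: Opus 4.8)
The plan is to identify $\mathrm C^*_{\mathrm r}(\Gamma)$ with the reduced crossed product $A\rtimes_{\mathrm r}\F_n$, where $A:=\mathrm C^*_{\mathrm r}\bigl(\bigoplus_{\F_n}F\bigr)$, run the Pimsner--Voiculescu six-term exact sequence for the $\F_n$-action, and so reduce the computation of $\mathrm K_*(\mathrm C^*_{\mathrm r}\Gamma)$ to the group homology of $\F_n$ with coefficients in the module $\mathrm K_0(A)$. First, since $\bigoplus_{\F_n}F$ is locally finite, hence amenable, $A$ is the colimit $\varinjlim_S\bigotimes_{s\in S}\C F$ over the finite subsets $S\subseteq\F_n$, with connecting maps $a\mapsto a\otimes 1$; every stage is finite dimensional, so $\mathrm K_1(A)=0$ and $\mathrm K_0(A)=\varinjlim_S\bigotimes_{s\in S}\mathrm K_0(\C F)$ with $\mathrm K_0(\C F)=\Z\,\Min F$ free on the classes of minimal projections. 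Write $[e_0]\in\Min F$ for the class of the minimal projection onto the trivial representation, and for $\phi\in\Min F^{(\F_n)}$ --- a finitely supported map to $\Min F$ with basepoint $[e_0]$ --- put $P_\phi:=\bigotimes_{s\in\operatorname{supp}\phi}p_{\phi(s)}$, a genuine projection in $A$ (the product being over the finite support). Rewriting $[1_{\C F}]=\sum_\rho d_\rho[p_\rho]$ in each tensor factor is a unitriangular change of the obvious bases at each finite stage under which every connecting map becomes ``append $[1_{\C F}]$'', and one reads off that $\{[P_\phi]\}_{\phi\in\Min F^{(\F_n)}}$ is a $\Z$-basis of $\mathrm K_0(A)$. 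The translation action of $\F_n$ on the tensor factors permutes this basis, so $\mathrm K_0(A)\cong\Z[\Min F^{(\F_n)}]$ as a $\Z[\F_n]$-module for the translation action, with the constant function $*$ (value $[e_0]$ everywhere), corresponding to $[1_A]$, the unique fixed point; every other $\phi$ has trivial stabiliser, since no non-trivial element of $\F_n$ preserves the finite non-empty set $\operatorname{supp}\phi$.

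Feeding $\mathrm K_*(A)$ into the Pimsner--Voiculescu sequence for the action $\alpha$ of $\F_n=\langle a_1,\dots,a_n\rangle$ and using $\mathrm K_1(A)=0$ produces the short exact sequence
\[
0\longrightarrow \mathrm K_1(\mathrm C^*_{\mathrm r}\Gamma)\longrightarrow \mathrm K_0(A)^{\,n}\xrightarrow{\ \mathbf d\ }\mathrm K_0(A)\xrightarrow{\ \iota_*\ } \mathrm K_0(\mathrm C^*_{\mathrm r}\Gamma)\longrightarrow 0,\qquad
\mathbf d(x_1,\dots,x_n)=\sum_{j=1}^n\bigl(x_j-(\alpha_{a_j})_*x_j\bigr),
\]
so that $\mathrm K_1(\mathrm C^*_{\mathrm r}\Gamma)=\ker\mathbf d$ and $\mathrm K_0(\mathrm C^*_{\mathrm r}\Gamma)=\operatorname{coker}\mathbf d$, with $\iota_*$ induced by $A\hookrightarrow A\rtimes_{\mathrm r}\F_n$. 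Now $\mathbf d$ is, up to sign, the first differential of the complex obtained by applying $(-)\otimes_{\Z[\F_n]}\mathrm K_0(A)$ to the length-one free resolution $0\to\Z[\F_n]^n\xrightarrow{\partial}\Z[\F_n]\to\Z\to 0$ of the trivial module coming from the contractible Cayley tree of $\F_n$ (here $\partial(\xi_1,\dots,\xi_n)=\sum_j\xi_j(a_j-1)$). Hence, by right-exactness of the tensor product, $\mathrm K_0(\mathrm C^*_{\mathrm r}\Gamma)=\mathrm H_0(\F_n;\mathrm K_0(A))=\mathrm K_0(A)_{\F_n}$, while $\mathrm K_1(\mathrm C^*_{\mathrm r}\Gamma)=\operatorname{Tor}_1^{\Z[\F_n]}(\Z,\mathrm K_0(A))=\mathrm H_1(\F_n;\mathrm K_0(A))$.

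It remains to compute these two homology groups. By the first paragraph $\mathrm K_0(A)\cong\Z\oplus\bigoplus_{\mathcal O}\Z[\F_n]$ as $\Z[\F_n]$-modules, where the $\Z$ is spanned by $*$ and $\mathcal O$ runs over the free $\F_n$-orbits in $\Min F^{(\F_n)}$. Shapiro's lemma gives $\mathrm H_*(\F_n;\Z[\F_n])=\mathrm H_*(\{1\};\Z)$, concentrated in degree $0$; combined with $\mathrm H_0(\F_n;\Z)=\Z$, $\mathrm H_1(\F_n;\Z)=\Z^n$ and $\mathrm H_{\ge 2}(\F_n;\Z)=0$ this yields $\mathrm K_0(\mathrm C^*_{\mathrm r}\Gamma)=\Z\bigl[\Min F^{(\F_n)}/\F_n\bigr]=\Z R$ and $\mathrm K_1(\mathrm C^*_{\mathrm r}\Gamma)=\Z^n$. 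For the generators: a basis element of $\Z R$ is $\iota_*[P_\phi]$ for an orbit representative $\phi$, and $P_\phi$ is a projection inside $\mathrm C^*_{\mathrm r}\bigl(\bigoplus_{\F_n}F\bigr)$; and the unital $\F_n$-equivariant inclusion $\C=\mathrm C^*_{\mathrm r}(\{1\})\hookrightarrow A$, whose $\mathrm K_0$-image hits the $*$-summand, induces a morphism of Pimsner--Voiculescu sequences restricting on kernels to the isomorphism $\Z^n=\mathrm K_1(\mathrm C^*_{\mathrm r}\F_n)\xrightarrow{\ \sim\ }\mathrm K_1(\mathrm C^*_{\mathrm r}\Gamma)$ that corresponds to $\mathrm H_1(\F_n;\Z)\hookrightarrow\mathrm H_1(\F_n;\mathrm K_0(A))$; since $\mathrm K_1(\mathrm C^*_{\mathrm r}\F_n)$ is freely generated by the classes $[a_1],\dots,[a_n]$ of the canonical unitaries, so is $\mathrm K_1(\mathrm C^*_{\mathrm r}\Gamma)$ by their images.

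The main obstacle is the first paragraph: identifying $\mathrm K_0(A)$ \emph{together with its $\F_n$-module structure}. The subtlety is that the connecting maps act on $\mathrm K_0$ as ``tensor with $[1_{\C F}]$'' rather than with a basis vector, which is exactly why the basepoint must be taken at the trivial representation and why the specific projections $P_\phi$ --- not arbitrary products of minimal projections --- give a basis; one also has to verify that the resulting identification $\mathrm K_0(A)\cong\Z[\Min F^{(\F_n)}]$ is $\F_n$-equivariant and that $*$ is the only fixed point. Everything after that is standard homological bookkeeping for the free group, together with the known computation of $\mathrm K_*(\mathrm C^*_{\mathrm r}\F_n)$.
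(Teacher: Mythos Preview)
Your proof is correct and shares the paper's overall architecture: both identify $\mathrm C^*_{\mathrm r}(\Gamma)\cong \mathrm C^*(B)\rtimes_{\mathrm r}\F_n$ with $B=\bigoplus_{\F_n}F$, use that $\mathrm C^*(B)$ is AF with $\Ko(\mathrm C^*(B))\cong\Z[\Min F^{(\F_n)}]$, and feed this into the Pimsner--Voiculescu sequence to reduce everything to the kernel and cokernel of $\sigma_*$.

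The genuine difference lies in how the kernel and the $\Kl$-generators are handled. The paper computes $\ker\sigma_*$ by a direct combinatorial argument (its Lemma~\ref{Lem: Kernel}): one picks a word of maximal length in the union of supports and derives a contradiction by case analysis on the first letter, concluding that only the constant function $\mathbf 1_{p_F}$ survives in each summand. You instead observe that $\Ko(\mathrm C^*(B))\cong\Z\oplus\bigoplus_{\mathcal O}\Z[\F_n]$ as $\Z[\F_n]$-modules and invoke Shapiro's lemma, so that $\mathrm H_1(\F_n;\Z[\F_n])=0$ kills every free summand and only $\mathrm H_1(\F_n;\Z)=\Z^n$ remains. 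For the generators of $\Kl$, the paper proves a separate Proposition~\ref{Prop: K1, generator} by chasing the Toeplitz extensions of \cite{PV82} to show $\partial_1([u_i])=(0,\dots,-[1],\dots,0)$; you get the same conclusion by naturality of the Pimsner--Voiculescu sequence along the unital equivariant inclusion $\C\hookrightarrow\mathrm C^*(B)$, reducing to the known description of $\Kl(\mathrm C^*_{\mathrm r}\F_n)$. Your route is more conceptual and avoids the ad hoc combinatorics and the Toeplitz bookkeeping; the paper's route is more self-contained and makes the boundary map explicit, which is useful later when the same kernel computation (Lemma~\ref{Lem: Kernel}) is reused verbatim on the topological side for $\hat F^{(\F_n)}$.
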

 
\begin{introtheorem}\label{Thm: LHS}
	Let $\Gamma = F \wr \F_n$ with $F$ a non-trivial finite group. Write $\F_n=\langle{a_1, \ldots, a_n}\rangle$. The topological K-groups of  $\uE \Gamma$ can be described as two free abelian groups.
	\begin{description}
		\item $\KoL(\uE \Gamma)= \Z R' $ with $R'$ a countable basis indexed by representatives for $\F_n$-orbits in $\hat F ^{(\F_n)}$.
		\item $\KlL(\uE \Gamma)= \Z [v_1]\oplus\ldots\oplus \Z [v_n]$,  where $[v_i]$ is the canonical generator of $\KlZ(\uE \Z)$ via the identification $\Z =\langle{a_i}\rangle$. Indeed, the inclusions $\langle{a_i}\rangle \hookrightarrow \F_n \hookrightarrow \Gamma$ give rise to an inclusion  
		 $\langle{v_i}\rangle = \KlZ(\uE \Z) \hookrightarrow \KlFn(\uE\F_n)\cong \KlL(\uE\Gamma)$.
	\end{description}
\end{introtheorem}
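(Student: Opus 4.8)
The plan is to compute $\KoL(\uE\Gamma)$ and $\KlL(\uE\Gamma)$ from the $2$-dimensional $\Gamma$-CW model via the Mart\'inez spectral sequence, and then to pin down the generators by naturality along the inclusion $\F_n\hookrightarrow\Gamma$. I would take for $\uE\Gamma$ the model $X$ that comes from the graph-of-groups splitting of $\Gamma$ over $\F_n$: the projection $\Gamma\ra\F_n$ makes $\Gamma$ act on the Cayley tree $T=\uE\F_n$ with every stabiliser equal to $B:=\ker(\Gamma\ra\F_n)=\bigoplus_{\F_n}F$, and $X$ is obtained by inflating each vertex and each edge of $T$ to a copy of $\uE B$, the one-dimensional coset model for the locally finite group $B$. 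The Mart\'inez spectral sequence $E^2_{p,q}=\operatorname{H}^{\mathfrak F}_p(X;\underline K_q)\Rightarrow\mathrm K^{\Gamma}_{p+q}(\uE\Gamma)$ has coefficient systems $\underline K_q\colon G/H\mapsto\mathrm K_q(\mathrm C^*_{\mathrm r}H)$, which for finite $H$ equals the complex representation ring $\mathrm R_{\C}(H)$ when $q$ is even and $0$ when $q$ is odd. Since $X$ is $2$-dimensional, $E^2_{p,q}=0$ for $p\notin\{0,1,2\}$, and since the odd coefficient systems vanish, the only possibly nonzero differential $d^2\colon E^2_{2,q}\ra E^2_{0,q+1}$ has one of its two ends zero; hence $E^\infty=E^2$, and the filtration gives $\KlL(\uE\Gamma)\cong\operatorname{H}^{\mathfrak F}_1(X;\underline{\mathrm R}_{\C})$ together with a short exact sequence $0\ra\operatorname{H}^{\mathfrak F}_0(X;\underline{\mathrm R}_{\C})\ra\KoL(\uE\Gamma)\ra\operatorname{H}^{\mathfrak F}_2(X;\underline{\mathrm R}_{\C})\ra0$.

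To evaluate these Bredon groups I would use the fibration $X\ra T$, whose fibre over every vertex and every edge is $\uE B$. Taking Bredon homology first along the fibres — where, $\uE B$ being one-dimensional and Bredon-acyclic in positive degrees, only degree $0$ survives, with value $D:=\operatorname{H}^{\mathfrak F}_0(\uE B;\underline{\mathrm R}_{\C})=\varinjlim_{H\le B\text{ finite}}\mathrm R_{\C}(H)=\mathrm K_0(\mathrm C^*_{\mathrm r}B)$, carrying the residual shift action of $\F_n$ — the associated spectral sequence has a single nonzero row, so $\operatorname{H}^{\mathfrak F}_{*}(X;\underline{\mathrm R}_{\C})\cong\operatorname{H}_{*}(\F_n;D)$ (group homology with the shift coefficients). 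As $\F_n$ has cohomological dimension $1$, this yields $\operatorname{H}^{\mathfrak F}_2(X;\underline{\mathrm R}_{\C})=0$ — so the sequence above degenerates to $\KoL(\uE\Gamma)\cong\operatorname{H}^{\mathfrak F}_0(X;\underline{\mathrm R}_{\C})\cong\operatorname{H}_0(\F_n;D)=D_{\F_n}$ — and $\KlL(\uE\Gamma)\cong\operatorname{H}^{\mathfrak F}_1(X;\underline{\mathrm R}_{\C})\cong\operatorname{H}_1(\F_n;D)$.

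It remains to understand $D$ as a $\Z[\F_n]$-module. Writing $\mathrm C^*_{\mathrm r}F=\C F\cong\bigoplus_{\rho\in\hat F}\mathrm M_{d_\rho}(\C)$, one has $\mathrm C^*_{\mathrm r}B=\bigotimes_{\F_n}\mathrm C^*_{\mathrm r}F$, an AF algebra. Because the trivial representation has dimension $d_{\mathbf 1}=1$, every connecting map in this inductive limit is, after a change of basis, the inclusion of a direct summand; hence $D=\mathrm K_0(\mathrm C^*_{\mathrm r}B)$ is free abelian with canonical basis $\hat B=\hat F^{(\F_n)}$, and as a $\Z[\F_n]$-module it is the permutation module $\Z[\hat F^{(\F_n)}]$ for the shift. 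The $\F_n$-stabiliser of $f\in\hat F^{(\F_n)}$ is trivial unless $f$ is the constant function $\mathbf 1$, whose orbit is a point; so $D\cong\Z\oplus\bigoplus_{[f]\ne[\mathbf 1]}\Z[\F_n]$ as $\Z[\F_n]$-modules. Therefore $\operatorname{H}_0(\F_n;D)=D_{\F_n}=\Z[\F_n\backslash\hat F^{(\F_n)}]=\Z R'$ and, free $\Z[\F_n]$-modules being $\operatorname{H}_1$-acyclic, $\operatorname{H}_1(\F_n;D)=\operatorname{H}_1(\F_n;\Z)=\Z^n$. With the first two paragraphs this gives $\KoL(\uE\Gamma)=\Z R'$ and $\KlL(\uE\Gamma)\cong\Z^n$.

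For the generators, the same Mart\'inez argument applied to the torsion-free $\F_n$ (where $\underline{\mathrm R}_{\C}$ reduces to the constant system $\Z$ and $\uE\F_n=T$) gives $\KlFn(\uE\F_n)=\operatorname{H}_1(\F_n;\Z)=\F_n^{\mathrm{ab}}$, in which the copy $\KlZ(\uE\Z)=\operatorname{H}_1(\langle a_i\rangle;\Z)$ sits as the $i$-th coordinate subgroup, the canonical generator $[v_i]$ being the class of $a_i$. The inclusion $\F_n\hookrightarrow\Gamma$ induces on $\mathrm K_1$ of the classifying spaces exactly the map $\operatorname{H}_1(\F_n;\Z)\ra\operatorname{H}_1(\F_n;D)$ coming from the $\F_n$-equivariant split monomorphism $\Z\hookrightarrow D$, $1\mapsto[\mathbf 1]$; by the third paragraph this map is an isomorphism, proving $\KlFn(\uE\F_n)\cong\KlL(\uE\Gamma)$, so that $[v_1],\dots,[v_n]$ map to a $\Z$-basis of $\KlL(\uE\Gamma)$, as claimed. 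I expect the main obstacle to be the second paragraph: one must verify rigorously that the $2$-dimensional model genuinely fibres over $T=\uE\F_n$ with fibres $\uE B$ and that the attendant spectral sequence collapses — equivalently, carry out the computation on the Bredon chain complex of $X$ directly, keeping track of the cell stabilisers (the finite subgroups of $B$ and their $\F_n$-conjugates) and checking that the cells of $X$ not lying in a fibre contribute nothing to $\operatorname{H}^{\mathfrak F}_1$ beyond the $\Z^n$ carried by the tree; the freeness of $D$, which rests on $d_{\mathbf 1}=1$, is precisely what makes this cancellation work.
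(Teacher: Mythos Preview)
Your overall architecture matches the paper's: reduce $\mathrm K_*^\Gamma(\uE\Gamma)$ to Bredon homology via the $2$-dimensional model, then compute $\mathrm H_*^{\mathfrak F}(\Gamma;R_{\C})\cong\mathrm H_*(\F_n;D)$ with $D=\mathrm H_0^{\mathfrak F_B}(B;R_{\C})\cong\Z\hat F^{(\F_n)}$ by the Mart\'inez spectral sequence for the extension $B\to\Gamma\to\F_n$, which collapses because $\uE B$ is $1$-dimensional and $\F_n$ has cohomological dimension $1$. Your $\mathrm K_0$ computation and the identification of the generators via $\langle a_i\rangle\hookrightarrow\F_n\hookrightarrow\Gamma$ also agree with the paper.

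The genuine difference is in the $\mathrm K_1$ step. The paper computes $\mathrm H_1(\F_n;D)=\operatorname{Ker}\delta$ by a bare-hands combinatorial lemma (its Lemma~3.1), analysing words of maximal length to show $\operatorname{Ker}\delta=\bigoplus_{i=1}^n\Z\,{\bf 1}_1$. You instead observe that the $\F_n$-action on $\hat F^{(\F_n)}\setminus\{{\bf 1}_1\}$ is free, so $D\cong\Z\oplus(\text{free }\Z[\F_n]\text{-module})$, whence $\mathrm H_1(\F_n;D)=\mathrm H_1(\F_n;\Z)=\Z^n$ directly. This is a cleaner and more conceptual route; it bypasses the combinatorics entirely and makes the isomorphism $\KlFn(\uE\F_n)\cong\KlL(\uE\Gamma)$ transparent from the outset, since it comes from the split inclusion $\Z\hookrightarrow D$. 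The paper's approach, by contrast, gives an explicit description of the kernel inside $\bigoplus_i D$, which it also reuses verbatim on the analytical side (for $\operatorname{Ker}\sigma_*$ in the Pimsner--Voiculescu sequence), so the two sides are treated in parallel.

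Two small points. First, what you call the ``Mart\'inez spectral sequence'' in your first paragraph, namely $E^2_{p,q}=\mathrm H_p^{\mathfrak F}(X;\underline K_q)\Rightarrow\mathrm K_{p+q}^\Gamma(\uE\Gamma)$, is the equivariant Atiyah--Hirzebruch spectral sequence (equivalently, the content of the paper's Theorem~2.3 for $\dim\le 2$); the Mart\'inez spectral sequence proper is the extension spectral sequence you invoke in your second paragraph as the ``fibration'' argument. Second, your closing worry is unfounded: the collapse you need is exactly what the paper establishes in two lines --- $\mathrm H_q^{\mathfrak F_B}(B;R_{\C})=0$ for $q\ge 1$ (tree model for $\uE B$) and $\mathrm H_p(\F_n;-)=0$ for $p\ge 2$ --- so no delicate cell-by-cell analysis is required.
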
 
Comparing these two sides via the assembly map, the isomorphism for $F\wr\F_n$ is elementarily demonstrated  in Theorem \ref{Thm: B-C for free by finite}. Additionally,  due to K-amenability, we implicitly compute  the K-theory of $\mathrm C^*(\Gamma)$. We conclude the article by a remark on the modified trace conjecture. Assuming  
$\tau\colon \mathrm C^*_{\mathrm r}(\Gamma)\ra \C$ to be the canonical trace on $\mathrm C^*_{\mathrm r}(\Gamma)$,  in our case we have that
 $$\operatorname{Im}\tau_*(\mathrm K_0(\mathrm C^*_{\mathrm r}(\Gamma)) = \Z \big[\frac{1}{|F|}\big].$$ 
\subsection*{Acknowledgements} We would like to thank Alain Valette for his useful comments.
\section{Preliminaries}	
In this section, we introduce some notations and recall some facts that we will use in the rest of the article. In particular, we introduce our main tools for the topological computations relevant for the left-hand side of the assembly map.
{\subsection{Topological tools: exact sequences}}
We start with the definition of the main object appearing on the left-hand side of the Baum-Connes assembly map.
\begin{Def}
	For a group $G$, let $\mathfrak{F}$  be the family of finite subgroups of $G$. The classifying space for proper action denoted by $\uE G$ is a $G$-CW-complex such that for all elements in $\mathfrak{F}$,  the isotropy group is contractible and for all other subgroups this is empty.  	
\end{Def}	 
Note that $\uE G$ is unique up to $G$-homotopy equivalence. Moreover, an  infinite dimensional ($G$-$CW$-complex) model for $\uE G$ always exists. However, the most interesting one is the one with the minimal dimension as it simplifies the homological computations. 

One of the goals of the Baum-Connes conjecture is to compute the K-theory of $\mathrm C^*_{\mathrm r}(G)$ via the $G$-equivariant K-homology of $\uE G$.
The homology theory used for the left-hand side of the Baum-Connes conjecture is called  Bredon homology, which we briefly recall here. 
In the context of Baum-Connes conjecture, this is defined in terms of the family $\mathfrak{F}$ of finite subgroups of $G$ and its coefficients are certain functors. Now, a few words on the nature of these functors. The orbit category $\mathfrak O_{\mathfrak F}G$ is the category whose objects are the homogeneous spaces $G/H$ for $H\in \mathfrak F$ and the morphisms are $G$-maps $G/H \ra G/K$.  The category of covariant functors from $\mathfrak O_{\mathfrak F}G$ to the category $\mathfrak {Ab}$ of abelian groups is denoted by $G$-$\mathrm{Mod}_{\mathfrak F}$. The objects of this category are called $\mathfrak O_{\mathfrak F}G$-modules. 
The Bredon homology groups of $\uE G$ with coefficient $M$ in $G$-$\text{Mod}_{\mathfrak{F}}$ is denoted by 
$\mathrm H_i^{\mathfrak{F}} (\uE G; M)$.
We remark that $\mathrm H_i^{\mathfrak{F}} (\uE G; M)$ is isomorphic to 
$\mathrm H_i^{\mathfrak{F}} (G; M)$.
\\
The appropriate  coefficient module in this context is the $\mathfrak O_{\mathfrak F}G$-module 
 $\mathrm R_{\C}\in G$-$\text{Mod}_{\mathfrak{F}} $. The value of $\mathrm R_{\C}$ on $G/H$ is the complex representation ring $\mathrm R_{\C}(H)$. We recall that $\mathrm R_{\C}(H)$ is the free abelian group on $\hat H$, the dual of $H$.
 For more details on Bredon homology see \cite{MV03}.
 
Despite the fact that the Baum-Connes' philosophy suggests that computations on the topological side should be easier because we could use standard methods from algebraic topology, such computations can turn to be hard.  However, for
$\text{dim}(\uE G) \leq 2$ we have more explicit results. 
\begin{Thm}	\label{Thm: Misislin 1, dim EG=1}
	[\cite{MV03}, Theorem I.3.17]  \label{thm: dim EG=1}
	Suppose  $\uE G$  has a model with $\operatorname{dim}\uE G = 1$, i.e. a tree. Then
	${\operatorname{H}}_i^{\mathfrak {F}}(G, R_{\C})=0$ for $i>1$, and there is an exact sequence
	\begin{equation*}
		0
		\longrightarrow 
		\HlF(G; R_{\C}) 
		\longrightarrow 
		\bigoplus_{[e]}R_{\C}(G_e)
		 \longrightarrow \bigoplus_{[v]}R_{\C}(G_v) \longrightarrow  \HoF(G; R_{\C}) \longrightarrow 0,
	\end{equation*}
	where the direct sums are respectively taken over the orbits of edges and vertices of the tree $\uE G$, and $G_e$ and $G_v$ denote the  stabilisers of the edges and the vertices, respectively.
\end{Thm}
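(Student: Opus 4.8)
The plan is to compute the Bredon homology groups directly from an equivariant cellular chain complex. Recall the general fact (see \cite{MV03}): for any proper $G$-CW-complex $X$ and any covariant coefficient module $M\in G\text{-}\mathrm{Mod}_{\mathfrak F}$, the group $\mathrm H_n^{\mathfrak F}(X;M)$ is the $n$-th homology of the chain complex
\begin{equation*}
	C_n^{\mathfrak F}(X;M)\;=\;\bigoplus_{[\sigma]}M(G/G_\sigma),
\end{equation*}
where $[\sigma]$ ranges over a set of representatives for the $G$-orbits of $n$-cells of $X$, $G_\sigma$ is the stabiliser of $\sigma$ (finite, hence in $\mathfrak F$, by properness), and the differential is assembled from the cellular boundary maps together with the morphisms of $\mathfrak O_{\mathfrak F}G$ induced by incidences of cells. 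This identification comes from writing the contravariant cellular chain functor $\underline C_n(X)$ as a direct sum of representable $\mathfrak O_{\mathfrak F}G$-modules $\Z[\mathrm{mor}_{\mathfrak O_{\mathfrak F}G}(-,G/G_\sigma)]$ — which is exactly what properness buys us — and then using that $\Z[\mathrm{mor}_{\mathfrak O_{\mathfrak F}G}(-,G/H)]\otimes_{\mathfrak O_{\mathfrak F}G}M\cong M(G/H)$ by a Yoneda-type argument.

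Next I would apply this with $X=\uE G$ the given $1$-dimensional model, i.e. a tree $T$ on which $G$ acts with finite stabilisers, and with $M=R_{\C}$. Since $T$ has cells only in dimensions $0$ and $1$, the chain complex collapses to the two-term complex
\begin{equation*}
	0\longrightarrow\bigoplus_{[e]}R_{\C}(G_e)\xrightarrow{\ \partial\ }\bigoplus_{[v]}R_{\C}(G_v)\longrightarrow 0,
\end{equation*}
sitting in degrees $1$ and $0$, where $[e]$ and $[v]$ run over representatives of the $G$-orbits of edges and of vertices, and where $\partial$ is the appropriate signed sum of induction homomorphisms $R_{\C}(G_e)\to R_{\C}(G_v)$ attached to the endpoint inclusions $G_e\hookrightarrow G_v$. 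From this the theorem is immediate: $\mathrm H_i^{\mathfrak F}(G;R_{\C})\cong\mathrm H_i^{\mathfrak F}(\uE G;R_{\C})=0$ for all $i>1$ because the complex vanishes there, while $\HlF(G;R_{\C})=\ker\partial$ injects into $\bigoplus_{[e]}R_{\C}(G_e)$ and $\HoF(G;R_{\C})=\operatorname{coker}\partial$ receives a surjection from $\bigoplus_{[v]}R_{\C}(G_v)$; splicing these two statements yields precisely the claimed four-term exact sequence. The isomorphism $\mathrm H_i^{\mathfrak F}(\uE G;R_{\C})\cong\mathrm H_i^{\mathfrak F}(G;R_{\C})$ is the one already recorded in the preliminaries, so nothing further is needed there.

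The one point that genuinely requires care — and the main technical obstacle — is the orientation behaviour of the cellular chain functor: if some $g\in G$ fixes an edge of $T$ setwise while reversing its orientation, then the orbit of that edge does not contribute a genuinely free summand to $\underline C_1(T)$ (there is a sign twist), and the clean formula for $C_*^{\mathfrak F}$ fails as stated. This is dealt with by first replacing $T$ by its barycentric subdivision, which is again a tree realising a $1$-dimensional model of $\uE G$ and on which no cell stabiliser can reverse the orientation of a cell (an element fixing an edge now fixes both its endpoints, since the original and the newly inserted vertices are of different types and cannot be interchanged by the action). Passing to the subdivision changes neither $\uE G$ up to $G$-homotopy nor, therefore, the Bredon homology groups; it merely refines the indexing sets $[e]$ and $[v]$. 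With this normalisation the description of $C_*^{\mathfrak F}$ above is literally valid, and the argument goes through verbatim.
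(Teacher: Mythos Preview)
Your argument is correct and is essentially the standard proof one finds in the reference \cite{MV03}. Note, however, that the paper does not give its own proof of this statement: it is quoted verbatim from \cite[Theorem~I.3.17]{MV03} as a preliminary tool, so there is nothing in the paper to compare against. Your write-up reconstructs the proof from the Bredon chain complex exactly as in the source.

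One remark on the orientation issue you raise. Your fix via barycentric subdivision is valid, but be aware that it literally yields the four-term exact sequence for the \emph{subdivided} tree, whose sets of edge-orbits and vertex-orbits are larger than those of the original $T$. The way the theorem is usually stated (and used) is under the standing hypothesis that $G$ acts on $T$ \emph{without inversion}; under that hypothesis no subdivision is needed and the direct sums are indexed by the orbits of $T$ itself. If one wants the statement for a tree with inversions, one either passes to the subdivision (accepting the refined indexing) or adds an orientation twist to the edge contributions. Since in the applications of the present paper the relevant trees carry actions without inversion, this point is harmless here.
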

\begin{Thm}\label{Thm: Misilin 2, dim EG < 2}
	[\cite{MV03}, Theorem I.5.27]
	Suppose there exists a model for $\uE G$  with  $\text{dim}(\uE G) \leq 2$. There is a short exact sequence
	\begin{equation*}
		0
		\longrightarrow 
		\HoF(G;R_{\mathbb{C}})
		\longrightarrow
		\mathrm K_0^G(\underline{E}G)
		\longrightarrow
		\operatorname{H}_2^{\mathfrak{F}}(G;R_{\mathbb{C}})
		\longrightarrow
		0,
	\end{equation*}
	and an isomorphism 
	$\HlF(G;R_{\mathbb{C}})\simeq  \mathrm K_1^G(\underline{E}G)$.	
\end{Thm}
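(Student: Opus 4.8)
The plan is to read the statement off from the collapse of the equivariant Atiyah--Hirzebruch spectral sequence (the Mart\'inez spectral sequence) computing the $G$-equivariant K-homology of $\uE G$. Fixing a finite-dimensional model of $\uE G$ and filtering it by its $G$-skeleta yields an increasing filtration $0=F_{-1}\subseteq F_0\subseteq F_1\subseteq\cdots$ of $\mathrm K_n^G(\uE G)$ together with a homological spectral sequence
$$E^2_{p,q}\;=\;\mathrm H_p^{\mathfrak F}\bigl(G;\ G/H\mapsto\mathrm K_q(\mathrm C^*_{\mathrm r}(H))\bigr)\;\Longrightarrow\;\mathrm K_{p+q}^G(\uE G),\qquad E^\infty_{p,q}=F_p/F_{p-1}.$$
The first step is to identify the coefficient $\mathfrak O_{\mathfrak F}G$-module: for a finite group $H$ one has $\mathrm K_0(\mathrm C^*_{\mathrm r}(H))=\mathrm K_0(\C H)=R_{\C}(H)$ and $\mathrm K_1(\mathrm C^*_{\mathrm r}(H))=0$, and Bott periodicity makes the module in degree $q$ depend only on the parity of $q$. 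Hence $E^2_{p,q}=\mathrm H_p^{\mathfrak F}(G;R_{\C})$ for $q$ even and $E^2_{p,q}=0$ for $q$ odd.

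Next I would invoke the dimension hypothesis. Since there is a model with $\dim\uE G\le 2$, we have $\mathrm H_p^{\mathfrak F}(G;R_{\C})=0$ for $p\ge 3$, so the only possibly nonzero entries of the $E^2$-page lie in the three columns $p\in\{0,1,2\}$ and the even rows. The differential $d^2$ has bidegree $(-2,1)$, so it changes the parity of $q$ and therefore has a zero source or a zero target; and $d^r$ for $r\ge 3$ decreases the column index by at least $3$, hence vanishes on the band $p\in\{0,1,2\}$. Consequently the spectral sequence degenerates: $E^2=E^\infty$.

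It then remains to read off the associated graded of the filtration in total degrees $0$ and $1$. In degree $1$ the entries at $(0,1)$ and $(2,-1)$ vanish (odd rows), so the only surviving term is $E^\infty_{1,0}=\HlF(G;R_{\C})$, giving the isomorphism $\HlF(G;R_{\C})\simeq\mathrm K_1^G(\uE G)$. In degree $0$ the entry at $(1,-1)$ vanishes, so the surviving terms are $E^\infty_{0,0}=\HoF(G;R_{\C})$, which sits at the bottom of the filtration, and $E^\infty_{2,-2}=\operatorname{H}_2^{\mathfrak F}(G;R_{\C})$, which appears as the top filtration quotient; this is precisely the short exact sequence
$$0\longrightarrow\HoF(G;R_{\C})\longrightarrow\mathrm K_0^G(\uE G)\longrightarrow\operatorname{H}_2^{\mathfrak F}(G;R_{\C})\longrightarrow0.$$
The one genuinely substantial point is setting up this spectral sequence and identifying its $E^2$-term: its construction and convergence (for which finite-dimensionality of the model is what is needed, and is assumed) and, above all, the identification of the $E^2$-page with Bredon homology carrying the representation-ring coefficient system concentrated in even degrees. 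Once this input is in hand the collapse and the extension bookkeeping are formal; I would only add the caveat that the degree-$0$ extension need not split, which is why the conclusion is stated as a short exact sequence rather than a splitting.
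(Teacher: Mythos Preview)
The paper does not prove this theorem; it is quoted as a black box from \cite{MV03}, Theorem~I.5.27, and used as input later on. Your argument via the equivariant Atiyah--Hirzebruch spectral sequence is the standard one and is precisely how the result is obtained in \cite{MV03}: the $E^2$-page is Bredon homology with coefficients in $R_{\C}$ concentrated in even rows, the dimension bound kills columns $p\ge 3$, and a parity/support check on the differentials forces collapse, after which the filtration in total degrees $0$ and $1$ yields exactly the stated exact sequence and isomorphism.

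One terminological correction: do not call this the Mart\'inez spectral sequence. In this paper (and in \cite{Mart02}) that name is reserved for the Lyndon--Hochschild--Serre-type spectral sequence in Bredon homology associated to a group extension, which is a different object used elsewhere in the paper. The spectral sequence you are using is the equivariant Atiyah--Hirzebruch spectral sequence arising from the skeletal filtration of $\uE G$; drop the parenthetical identification with Mart\'inez to avoid conflating the two.
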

We proceed by introducing another tool, a spectral sequence for group extensions, which  will play an important role in our homological computations.
This spectral sequence is an analogue of the Lyndon-Hochschild-Serre spectral sequence in group homology, which was developed by Mart\'inez in \cite{Mart02}. 
Consider the group $G = N\rtimes Q$ associated with a split short exact sequence
$
0\longrightarrow N \longrightarrow G \longrightarrow  Q \longrightarrow 0
$. 
Denote by $\mathfrak{F}$ the family of finite subgroups of $G$ and by $\bar{\mathfrak H}$ the family of finite subgroups of $Q$. Set
$\mathfrak{F}_N= \mathfrak{F}\cap N $  to be the family of finite subgroups of $N$. Consider the pull back of family $\bar{\mathfrak H}$, that is 
${\mathfrak H}=\{H\leq G\colon N\leq H \,\, \text{and}\,\, H/N\in \bar{\mathfrak H}\}$.
Theorem 5.1 in \cite{Mart02}\label{Thm: Martinez spectral sequence} provides us with a spectral sequence whose second page satisfies
\begin{equation*}
	\mathrm E_{p, q}^2 = \mathrm{H}_p^{\bar{\mathfrak H}}(Q;\overline{\mathrm{H}_q^{\mathfrak{F}\cap -}(-; D)})
	\Longrightarrow 
	\mathrm E_{p, q}^{\infty} = \mathrm H_{p+q}^{\mathfrak F}(G; D).
\end{equation*}
{\subsection{Invariants and co-invariants}}
For a set X, we denote by $\Z X$ the free abelian group generated by $X$. It can be identified with the group of almost everywhere zero functions $X\ra \Z$.
\\ 
Let $M$ be a  $G$-module.  We denote by $M^G$ the sub-module of G-invariants of the action and by $M_G = {M}/{\langle {m - g\!\cdot\! m\colon m\in M, g\in G}\rangle}$ the module of 
G-co-invariants. In fact, $M^G$ and $M_G$ are  respectively the largest $G$-invariant submodule and $G$-invariant quotient.
We recall Lemma 1 in \cite{FPV16}.
\begin{Rem}\label{Rem: inv/coinv} 
	Let $G$ be a countable group, $X$ be a countable $G$-set, and  $Y$ be the set of finite $G$-orbits in $X$. The space of invariants and co-invariants of $G\curvearrowright \Z X$ can be described as
	\begin{align*}
	(\mathbb{Z}X)^G &\cong \mathbb{Z}Y,\\
	(\mathbb{Z}X)_G &\cong\mathbb{Z}(G\backslash X).
	\end{align*}
\end{Rem}
Let $R$ be a set of representatives for the orbit space of G on X. Since $(\mathbb{Z}X)_G$ is a free abelian group, the quotient map $\Z X\ra (\Z X)_G$ splits, hence we get 
   $$
       \Z X = \langle{ m - g \cdt m \colon  m\in \Z X,\,\, g\in G}\rangle \oplus \,\Z R.
   $$ 
 Let $(F, \mathfrak p)$ be a  finite pointed set. We denote by  $F^{\,(\mathbb F_n)}$ the countable set of maps $f\colon \mathbb F_n \ra F$ that are almost everywhere $\mathfrak p$, or equivalently have finite support. In the above description take $X = F^{\,(\mathbb F_n)}$ with the action of  $\mathbb F_n$  by left multiplication, 
 here, we describe a set of representatives for $\mathbb F_n$-orbits in $F^{(\F_n)}$.
\\
 Given $f\in F^{(\F_n)}$, we consider the convex hull of its support which is a finite sub-tree $S$ of the Cayley graph of $\F_n$. We present such $f$ by
  $\chi_S^f\in F^{(\F_n)}$ in order to take into account the sub-tree $S$ associated to its support. Therefore, for
 $w\in \F_n$
 $$\chi_S^f(w)=
 \begin{cases}
 f(w) &  \,\,\,\,\,\,\, w\in \text{support}\, f\\
 \mathfrak p &\,\,\,\,\,\, \text{otherwise}.
 \end{cases}
 $$
 For a tree $T$,  we define its barycentre as  either a vertex or an edge remaining after removing successively the terminal vertices and corresponding edges. 
 \\ 
 Now we say that a tree $T$ is admissible if it is a finite sub-tree of the Cayley graph of $\F_n=\langle{a_1, \ldots, a_n}\rangle$ with its barycentre either $e$ or an edge $[e, a_i]$ for some $i=1, \ldots, n$.
  
 The next lemma, which is a generalisation of Lemma 2 in \cite{FPV16}, will help us to describe the $\mathrm K_0$-groups as co-invariants of certain actions. See Theorem \ref{Thm: RHS} and Theorem \ref{Thm: LHS}.
\begin{Lem}\label{Lem: repres} 
	Let $\F_n = \langle{a_1, \ldots, a_n}\rangle$ and let $(F, \mathfrak p)$ be a finite pointed set. Consider the action $\F_n  \curvearrowright  F^{\,(\F_n)}$ by left multiplication. A countable set $R$ of representatives for $\F_n$-orbits is
	\begin{equation*}
	R=
	\{
	\chi_S^f \colon f\in F^{(\F_n)},\, S \text{\,is admissible}\,
	 \}.
	\end{equation*}
	In particular, 
	\begin{align*}
	\Z(F^{(\F_n)}) 
	&=
	\,\langle{ m- w\cdot m \colon m\in \Z(F^{\,(\F_n)}),\, w\in \F_n}\rangle \oplus \,\,\Z R\\
	&=
	\,\langle{ m-a_i \cdot m \colon m\in \Z(F^{\,(\F_n)}),\, 1\leq i \leq n}\rangle \oplus \,\,\Z R.
	\end{align*}
\end{Lem}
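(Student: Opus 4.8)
The plan is to exploit that the Cayley graph $X_n$ of $\F_n=\langle a_1,\dots,a_n\rangle$ is a tree on which $\F_n$ acts by graph automorphisms via left multiplication, together with the classical fact that every finite tree has a canonical barycentre — a single vertex or a single edge — which is preserved by automorphisms. The first step is to record a few compatibilities. For $w\in\F_n$ and $f\in F^{(\F_n)}$, left multiplication by $w$ is a bijection of $\F_n$, so $\operatorname{support}(w\cdot f)=w\cdot\operatorname{support}(f)$; writing $S_f$ for the convex hull of $\operatorname{support}(f)$ in $X_n$, this is a finite union of geodesics, hence a finite subtree, and since $w$ acts as a tree automorphism one gets $S_{w\cdot f}=w\cdot S_f$, and then $\operatorname{bar}(S_{w\cdot f})=w\cdot\operatorname{bar}(S_f)$ because the barycentre is computed purely combinatorially. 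I would also note that every edge of $X_n$ has the form $\{w,wa_i\}$ for a unique pair $(w,i)$, and that $\chi_S^f=f$ as a function, so that $R=\{f\in F^{(\F_n)}:S_f\text{ is admissible}\}$.

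Next I would check that $R$ meets every $\F_n$-orbit. Given $f$ with $S_f$ nonempty, put $b=\operatorname{bar}(S_f)$. If $b$ is a vertex $v$, then $g:=v^{-1}\cdot f$ satisfies $S_g=v^{-1}\cdot S_f$ with $\operatorname{bar}(S_g)=v^{-1}v=e$, so $S_g$ is admissible and $g\in R$ lies in the orbit of $f$. If $b$ is an edge, write it as $\{v,va_i\}$ and again set $g:=v^{-1}\cdot f$; then $\operatorname{bar}(S_g)=\{e,a_i\}=[e,a_i]$, so $S_g$ is admissible and $g\in R$. (The constant map $\mathfrak p$ has empty support, is fixed by $\F_n$, forms its own orbit, and is kept as its own representative, declaring the empty tree admissible.)

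Then uniqueness: suppose $g,g'\in R$ with $g'=w\cdot g$; then $\operatorname{bar}(S_{g'})=w\cdot\operatorname{bar}(S_g)$, and both members of this equation lie in $\{e\}\cup\{[e,a_i]:1\le i\le n\}$. If $\operatorname{bar}(S_g)=e$, then $w=w\cdot e$ must occur in that list, forcing $w=e$. If $\operatorname{bar}(S_g)=[e,a_i]$, then $\{w,wa_i\}=\{e,a_j\}$ for some $j$; the alternative $w=a_j,\ wa_i=e$ would give $a_ja_i=e$ in $\F_n$, which is impossible, so $w=e$. Hence $g'=g$ and $R$ is a set of orbit representatives. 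The two displayed decompositions then follow formally: by Remark \ref{Rem: inv/coinv} the coinvariants $(\Z(F^{(\F_n)}))_{\F_n}\cong\Z(\F_n\backslash F^{(\F_n)})$ form a free abelian group, so the quotient map splits with a splitting onto $\Z R$, giving the first equality; for the second it suffices that the subgroup generated by all $m-w\cdot m$ equals the one generated by the $m-a_i\cdot m$, which follows by telescoping along a word $w=s_k\cdots s_1$ in the letters $a_i^{\pm1}$ and using $m'-a_i^{-1}\cdot m'=-\bigl((a_i^{-1}m')-a_i\cdot(a_i^{-1}m')\bigr)$.

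The only genuine obstacle is the existence, uniqueness and automorphism-equivariance of the barycentre of a finite tree; I expect to dispatch it either by citing the classical theory of centres of trees, or, for self-containedness, by a short induction on the number of edges, the point being that for a finite tree with at least two vertices the non-terminal vertices again span a (strictly smaller) subtree and this pruning step commutes with automorphisms. Everything else is routine bookkeeping.
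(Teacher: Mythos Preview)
Your argument is correct and follows the same approach as the paper: both translate a given $f$ by the inverse of the word labelling the barycentre of the convex hull of its support, landing in $R$, and the direct-sum decompositions are then read off from the coinvariants description preceding the lemma. You supply considerably more detail than the paper does---in particular an explicit uniqueness argument via equivariance of the barycentre, the telescoping identity for the second displayed equality, and the edge case of the constant map $\mathbf{1}_{\mathfrak p}$---whereas the paper simply asserts that distinct elements of $R$ lie in distinct orbits and leaves the decompositions implicit.
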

\begin{proof}
One sees that no two distinct elements of $R$ belong to the same  orbit of this action. We next show that all elements of $F^{(F_n)}$ lie in the orbit of some element in $R$. Take 
$f\in F^{(\F_n)}$. Let $S$ be the sub-tree associated to its support. Its barycentre is either a vertex $w$ or an edge $[w, wa_i]$ for $i\in \{1, \ldots, n\}$. In either cases, the sub-tree $\tilde{S}:= w^{-1} S$ is admissible. Therefore, $f$ belongs to the orbit of $\chi_{\tilde{S}}^{\tilde{f}}$, where $\tilde{f}\in F^{(\F_n)}$.
\end{proof}	
We close this section by describing the Baum-Connes assembly map for the locally finite group $\oplus_{ \F_n}F$. See Section 4 in \cite{FPV16} for more details.
{\subsection{The Baum-Connes assembly map for the locally finite group $\oplus_{ \F_n}F$}}
 Consider the group $\Gamma =\bigoplus _{\F_n} F\rtimes \F_n $ with  the action $ \F_n \curvearrowright \bigoplus_{ \F_n} F$  by left multiplication on the indices. Let $\mathcal B_n = \{w\in\F_n \colon \,\, |w| \leq n \}$ denote the balls of radius $n$ (with respect to the word metric) on the Cayley graph of $\F_n$. 
Write  $B= \bigoplus _{\F_n} F$. Let
$ B_n = \{ f \in B \colon \,\, {\operatorname{supp}} f \subset \mathcal B_n \}$ correspond to $B_n= \bigoplus_{1}^{m_n}F$, where $m_n = |\mathcal B_n|$. We may express the group $B$ as the co-limit of the increasing sequence of $B_n$'s. 
 It turns out that the subgroup $B$ plays an essential role in our K-theoretic computations. In particular, as we will see, $\mathrm C^*(B)$ provides us with sufficiently many projections to generate $\Ko(\mathrm C^*_{\mathrm r}(\Gamma))$.
\\
The conjecture holds for locally finite groups
as co-limits are preserved  by K-theory and by the assembly map. See Corollary I.5.2 and Theorem I.5.10 in \cite{MV03}. However, we need a more descriptive picture of its assembly map. 
Let us first recall the Baum-Connes assembly map for a finite group $F$.
 Take $\pi \in \hat F$ and let $e_{\pi}$ denote the minimal projection in $\mathrm M_{\text{dim}\,\pi}(\C)$. We then have
\begin{displaymath}
\mu_0^F \colon \operatorname{R_{\C}}(F)\rightarrow \operatorname{Min} F \,\,\,\,\,\text{with}\,\,\,\,\,
{\pi}  \mapsto  e_{\pi}.
\end{displaymath} 
Analogue to  Corollary 1 in \cite{FPV16}, we have the following proposition.

\begin{Prop}\label{Prop: K0locfin} Let $F$ be a non-trivial finite group, and $B=\bigoplus_{\F_n} F$.
	\begin{itemize}
		\item The free abelian group 
		        $\Z(\hat{F}^{(\F_n)})$ is isomorphic to $\KoB(\uE B)$  
		    via 
	            $$\pi\in\hat{F}^{(\F_n)}\longmapsto \otimes_{w\in {\operatorname{supp}}\,\pi} \pi_w\in R_\C(\oplus_{w\in {\operatorname{supp}}\,\pi}F_w)\subset \KoB(\uE B)$$
	       (where $\pi=(\pi_w)_w$, the group $F_w$ is the corresponding copy of $F$ in $B$ to the index $w\in \F_n$ and $\pi_w$ is an irreducible representation of $F_w$).
		
		\item The free abelian group 
	     	    $\Z(\Min  F^{(\F_n)})$
		    is isomorphic to
		        $\Ko(\mathrm C^*B)$
		    via 
	            $$p\in \Min  F^{(\F_n)}\longmapsto [\otimes_{w\in {\operatorname{supp}}\,p}p_w]\in \Min(\oplus_{w\in \operatorname{supp}\,p} F_w)\subset \Ko(\mathrm C^*(B)),$$
	     	and in particular the trivial map  ${\bold1}_{p_F}\in \Min F^{(\F_n)} $ with the constant value $p_F$
	        is mapped to $[1]$, the $\Ko$-class of $1\in \mathrm C^*(B)$.
		
		\item For
		     $\pi\in\hat{F}^{(\F_n)}$,  
		    we have
		     $\mu_B(\pi)=\otimes_{w\in {\operatorname{supp}}\,\pi}\mu_F(\pi_w)\in \Ko(C^*(B))$.
	\end{itemize}
\end{Prop}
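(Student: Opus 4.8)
Throughout, write $B=\bigoplus_{\F_n}F$, $B_n=\bigoplus_1^{m_n}F=F^{\mathcal{B}_n}$, and $F_w$ for the copy of $F$ indexed by $w\in\F_n$. The plan is to reduce the whole statement to the finite subgroups $B_n$ and then pass to the colimit $B=\varinjlim_n B_n$. Since each $B_n$ is finite, $\uE B_n$ is a point, so $\mathrm K_0^{B_n}(\uE B_n)=R_\C(B_n)$ (e.g.\ by Theorem \ref{Thm: Misilin 2, dim EG < 2} with $\dim=0$), and the assembly map $\mu_{B_n}\colon R_\C(B_n)\to\Ko(\mathrm C^*B_n)$ is the isomorphism $\pi\mapsto[e_\pi]$ onto the free abelian group spanned by the minimal projections of $\mathrm C^*B_n=\bigoplus_{\pi\in\widehat{B_n}}\mathrm M_{\dim\pi}(\C)$, where $e_\pi$ is as recalled in the text for finite groups. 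As noted in the excerpt (Corollary I.5.2 and Theorem I.5.10 in \cite{MV03}), the equivariant K-homology of $\uE(-)$, the K-theory of $\mathrm C^*(-)$, and the assembly map all commute with this colimit, hence
\[
\KoB(\uE B)=\varinjlim_n R_\C(B_n),\qquad \Ko(\mathrm C^*B)=\varinjlim_n\Ko(\mathrm C^*B_n),\qquad \mu_B=\varinjlim_n\mu_{B_n}.
\]

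Next I would use the elementary external-product decompositions $R_\C(B_n)\cong R_\C(F)^{\otimes m_n}$ and $\Ko(\mathrm C^*B_n)\cong\Ko(\mathrm C^*F)^{\otimes m_n}$, coming from $\widehat{G\times H}=\hat G\times\hat H$ and $\mathrm C^*(G\times H)\cong\mathrm C^*G\otimes\mathrm C^*H$. Thus $R_\C(B_n)$ has the $\Z$-basis of tuples $(\pi_w)_{w\in\mathcal{B}_n}$, i.e.\ of functions $\mathcal{B}_n\to\hat F$, and $\Ko(\mathrm C^*B_n)$ the $\Z$-basis of functions $\mathcal{B}_n\to\Min F$ (tensor products of minimal projections). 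Writing $B_{n+1}=B_n\times K$ with $K=\bigoplus_{w\in\mathcal{B}_{n+1}\setminus\mathcal{B}_n}F_w$, the transition map $R_\C(B_n)\to R_\C(B_{n+1})$ is $\Ind_{B_n}^{B_{n+1}}$, which sends a tuple $\pi$ to the external tensor product of $\otimes_{w\in\mathcal{B}_n}\pi_w$ with the regular representation of $K$, that is, to the dimension‑weighted sum over all extensions of $\pi$ to $\mathcal{B}_{n+1}$; the transition map $\Ko(\mathrm C^*B_n)\to\Ko(\mathrm C^*B_{n+1})$, $x\mapsto x\otimes 1$, behaves in exactly the same way. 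The key move is to record a function only through its \emph{support} (the set where it differs from the basepoint $\mathbf 1$, resp.\ $p_F$): for $\pi\in\hat F^{(\F_n)}$ the class $\otimes_{w\in\operatorname{supp}\pi}\pi_w\in R_\C(\bigoplus_{w\in\operatorname{supp}\pi}F_w)$ has a well-defined image $\varepsilon_\pi\in\KoB(\uE B)$, independent of the stage, because $\bigoplus_{w\in\operatorname{supp}\pi}F_w$ is a \emph{fixed} finite subgroup of $B$ and the relevant induction maps compose correctly; similarly one defines $\varrho_p\in\Ko(\mathrm C^*B)$ for $p\in\Min F^{(\F_n)}$. (One also checks these induction maps are injective, so the inclusions ``$\subset$'' in the statement are genuine.)

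The heart of the argument is then a unitriangularity observation. Fix $N$ with $\operatorname{supp}\pi\subseteq\mathcal{B}_N$, and regard $\pi$ also as the function $\mathcal{B}_N\to\hat F$ equal to $\mathbf 1$ off its support, i.e.\ as a standard basis vector of $R_\C(B_N)$. The dimension‑weighted formula gives, in $R_\C(B_N)$,
\[
\varepsilon_\pi=\pi+\sum_{\sigma}c_\sigma\,\sigma,
\]
where $\sigma$ runs over functions $\mathcal{B}_N\to\hat F$ with $\operatorname{supp}\sigma\supsetneq\operatorname{supp}\pi$ and $\sigma|_{\operatorname{supp}\pi}=\pi|_{\operatorname{supp}\pi}$, and the $c_\sigma$ are positive integers; the leading coefficient is $1$ precisely because the basepoint representation has dimension $1$. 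With respect to the partial order ``$\sigma\ge\pi$ iff $\operatorname{supp}\sigma\supseteq\operatorname{supp}\pi$ and $\sigma|_{\operatorname{supp}\pi}=\pi|_{\operatorname{supp}\pi}$'', which restricted to $R_\C(B_N)$ is locally finite, the transition matrix from the standard basis to $\{\varepsilon_\pi:\operatorname{supp}\pi\subseteq\mathcal{B}_N\}$ is unitriangular, hence invertible over $\Z$; so these $\varepsilon_\pi$ form a $\Z$-basis of $R_\C(B_N)$. Since each $\varepsilon_\pi$ is stable under the colimit maps, $\{\varepsilon_\pi:\pi\in\hat F^{(\F_n)}\}$ spans $\varinjlim_n R_\C(B_n)=\KoB(\uE B)$; and any relation $\sum_\pi k_\pi\varepsilon_\pi=0$ in the colimit already holds in some $R_\C(B_M)$ whose ball contains all supports involved, where the $\varepsilon_\pi$ are part of a basis, so every $k_\pi=0$. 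This yields the first bullet, $\Z(\hat F^{(\F_n)})\cong\KoB(\uE B)$ via $\pi\mapsto\varepsilon_\pi$. The verbatim argument on the $\mathrm C^*$-side — $\hat F$ replaced by $\Min F$, representations by minimal projections, the regular representation by the unit $1\in\mathrm C^*F$, dimensions by ranks — gives the second bullet $\Z(\Min F^{(\F_n)})\cong\Ko(\mathrm C^*B)$; there the empty tensor product is $1$, so the constant map $\mathbf 1_{p_F}$ (empty support) is sent to $[1]$.

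For the third bullet, $\mu_B=\varinjlim_n\mu_{B_n}$ and $\mu_{B_n}$ is the external tensor power $\mu_F^{\otimes m_n}$ of the assembly map of $F$: the assembly map respects external products, and for finite groups this is the plain identity $\mu_{F\times F}(\rho\boxtimes\sigma)=[e_\rho\otimes e_\sigma]=\mu_F(\rho)\otimes\mu_F(\sigma)$, since under $\mathrm M_{\dim\rho}(\C)\otimes\mathrm M_{\dim\sigma}(\C)\cong\mathrm M_{\dim\rho\dim\sigma}(\C)$ the minimal projection $e_{\rho\boxtimes\sigma}$ corresponds to $e_\rho\otimes e_\sigma$. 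Evaluating on the support-indexed classes (and noting $\mu_F(\mathbf 1)=p_F$, so supports are preserved) gives $\mu_B(\varepsilon_\pi)=\otimes_{w\in\operatorname{supp}\pi}\mu_F(\pi_w)$, which is exactly the stated $\mu_B(\pi)=\otimes_{w\in\operatorname{supp}\pi}\mu_F(\pi_w)$. The one genuinely non-formal point — and where I expect to spend care — is that the transition maps in the colimit are dimension/rank‑weighted induction maps rather than naive ``extend by the basepoint'' maps, so the basis of the colimit cannot be read off directly and must be extracted via the unitriangular change of basis; getting the partial order right and checking its local finiteness is what makes the integer transition matrix genuinely invertible.
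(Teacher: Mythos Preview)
The paper does not actually prove this proposition: it is stated with the remark ``Analogue to Corollary~1 in \cite{FPV16}, we have the following proposition'' and no further argument. Your proof is correct and supplies exactly the details that are being deferred to that reference; in particular, the colimit description $\KoB(\uE B)=\varinjlim_n R_\C(B_n)$ and $\Ko(\mathrm C^*B)=\varinjlim_n\Ko(\mathrm C^*B_n)$, together with the identification of the transition maps as induction (resp.\ $x\mapsto x\otimes 1$), is precisely the content of the cited Corollary in the lamplighter paper, transported from $\Z$ to $\F_n$.

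Your unitriangularity step is the genuine content and is handled correctly: because $\dim(\mathbf 1)=1$ (resp.\ $\operatorname{rank}p_F=1$), the induced class $\varepsilon_\pi$ written in the standard basis of $R_\C(B_N)$ has leading coefficient $1$ at $\pi$ and all other terms strictly enlarge the support, so the change-of-basis matrix is unitriangular over $\Z$ and hence invertible. This is exactly the point you flag as ``non-formal'', and it is the only place where something could go wrong (if the basepoint had dimension $>1$ the argument would give a $\Z$-lattice of full rank but not necessarily the whole group). One cosmetic remark: your claim ``any relation already holds in some $R_\C(B_M)$'' uses injectivity of the transition maps, which you should state explicitly --- it follows from Frobenius reciprocity (restriction is a left inverse to induction on the trivial-extension summand), or equivalently from the unitriangularity you have just established.
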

We remark that,  through the article, we consider in particular the finite pointed sets
$(\hat {F}, 1)$ and $(\mathrm{Min \, }F,{p_F})$, where $1$ denotes the trivial representation of $F$,  
and $p_F$ denotes the projection $p_F=\frac{1}{|F|}\sum_{f\in F}{f}$.

\section{K-theory of $\mathrm C^*_{\mathrm r}(\Gamma)$}
In this section, we explicitly describe the analytical side of the Baum-Connes conjecture for $F\wr \F_n$. 

Let $A$ be a  C$^*$-algebra and as before let $\F_n = \langle {a_1, \ldots, a_n}\rangle$. For $ i=1, \ldots, n$,  let $\alpha_i\in \operatorname{Aut}(A)$ such that the $\alpha_i$'s define an action $\alpha$ of $\F_n$ on A.
For $i = 1, \ldots, n$, denote by $u_i$ the unitary $u_{a_i}\in \mathrm C^*_{\mathrm r}(\F_n)\subset \mathbb{B}(\ell^2(\F_n))$. 
We assume  $A $ is faithfully represented  on a Hilbert space $\cal H$, i.e. $A\subset \mathbb B(\cal H)$.
The reduced crossed product $A\rtimes_{\mathrm r} \F_n$ is generated by 
$$      \langle{A, u_{1},\ldots, u_{n} : \alpha_i(a) =  u_i a u_i^{-1}, \,\, 1\leq i\leq n,  a\in A}\rangle \subset {\mathbb B}(\ell^2(\F_n, {\cal H})).
$$
Let  $\sigma =  \sum_{i=1}^{n} \operatorname{Id}- \alpha _i$. 
The Pimsner-Voiculescu 6-term exact sequence for  reduced crossed products with free groups [\cite{PV82},Theorem 3.5] is a very convenient tool in K-theory.  It provides us with information about the K-theory of such a  crossed product in terms of the K-theory of the initial $\mathrm C^*$-algebra
\begin{displaymath}
\xymatrix{
	\bigoplus_{i=1}^n \Ko(A)\ar[r]^-{\operatorname{\sigma_*}}& \Ko(A)\ar[r]^ -{\iota_*} & \Ko(A\rtimes_{\mathrm r} {\mathbb F_n})\ar[d]^{\partial_0} \\
	\Kl(A\rtimes_{\mathrm r} {\mathbb F_n})\ar[u]^{\partial_1} & \Kl(A)\ar[l]_-{\iota_*} & \bigoplus_{i=1}^n \Kl(A) \ar[l]_-{\operatorname{\sigma_*}}.
}
\end{displaymath}
In order to describe our analytical K-groups, we need to understand the kernel of the homomorphism $\sigma_*$ appearing above. We start with a lemma which is essential for the later computations.

\begin{Lem}{\label{Lem: Kernel}}
	Let $(F, \mathfrak{p})$ be a finite pointed set and let $\F_n=\langle {a_1, \ldots, a_n}\rangle$. For $X= F^{(\F_n)}$, consider 
	\begin{equation*}
	\operatorname{\psi}\colon \bigoplus_{i=1}^n  \Z X \ra \Z X \,\,\,\,\,\,
	\text{via} \,\,\,\,\,\,
	\operatorname{\psi}((f_i)_{1\leq i\leq n}) = \sum _{i =1}^{n} {f_i - a_i \!\cdot\! f_i},
	\end{equation*}
	where $a_i \!\cdot\! f_i(x) = f_i(a_i^{-1}x) $. 
	The kernel of  $\psi$ is described by
	$Ker(\operatorname{\psi}) = \Z {\bold1}_{\mathfrak p} \oplus \ldots \oplus \Z {\bold1}_{\mathfrak p} 
	$,  where ${\bold1}_{\mathfrak p}$ denotes the constant map with the value $\mathfrak p$.	
\end{Lem}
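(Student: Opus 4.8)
The plan is to split off the unique $\F_n$-fixed point of $X=F^{(\F_n)}$ and thereby reduce the computation of $\ker\psi$ to the classical fact that the Cayley graph of $\F_n$, being a tree, is a one-dimensional model for $\uE\F_n=E\F_n$. Write $*:={\bold 1}_{\mathfrak p}\in X$ for the constant map and $X':=X\setminus\{*\}$ for the set of maps of non-empty (finite) support. Since $a_i\cdt *=*$ for all $i$, every tuple $(c_1*,\dots,c_n*)$ with $c_i\in\Z$ satisfies $\psi\big((c_i*)_i\big)=\sum_i\big(c_i*-a_i\cdt c_i*\big)=0$, which is the inclusion $\Z{\bold 1}_{\mathfrak p}\oplus\dots\oplus\Z{\bold 1}_{\mathfrak p}\subseteq\ker\psi$; all the work is in the reverse inclusion.

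First I would check that $\F_n$ acts \emph{freely} on $X'$: if $w\cdt g=g$ with $g\in X'$, then $g$ is constant along each $\langle w\rangle$-orbit of the left-translation action on $\F_n$, and since $\operatorname{supp}g$ is a non-empty finite union of such orbits, some power $w^k$ ($k\geq 1$) fixes a point of $\F_n$, so $w^k=e$ and hence $w=e$ as $\F_n$ is torsion-free. Choosing a transversal $Y$ for $\F_n\curvearrowright X'$ then identifies $\Z X'$ with the \emph{free} left $\Z[\F_n]$-module $\bigoplus_{Y}\Z[\F_n]$, the given action going to left multiplication. As the partition $X=X'\sqcup\{*\}$ is $\F_n$-invariant, this yields a decomposition $\Z X=\Z X'\oplus\Z{\bold 1}_{\mathfrak p}$ of $\Z[\F_n]$-modules with $\Z{\bold 1}_{\mathfrak p}\cong\Z$ the trivial module, and $\psi$ respects it (each $a_i$ preserves both summands). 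On the trivial summand $a_i\cdt{\bold 1}_{\mathfrak p}={\bold 1}_{\mathfrak p}$, so $\psi$ vanishes there and contributes exactly $(\Z{\bold 1}_{\mathfrak p})^{\oplus n}$ to $\ker\psi$; it therefore remains to prove $\psi$ is injective on the free summand.

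Under the identification $\Z X'\cong\bigoplus_{Y}\Z[\F_n]$, the restriction of $\psi$ there is the $Y$-fold direct sum of the single map $\delta\colon\Z[\F_n]^{\oplus n}\ra\Z[\F_n]$, $(x_i)_i\mapsto\sum_{i=1}^n(1-a_i)x_i$, so it suffices that $\delta$ be injective. But $\delta$ is — up to sign and the irrelevant choice of the side on which the coefficients $x_i$ sit — the boundary map of the cellular chain complex of the Cayley tree of $\F_n$; equivalently, $0\ra\Z[\F_n]^{\oplus n}\stackrel{\delta}{\longrightarrow}\Z[\F_n]\stackrel{\varepsilon}{\longrightarrow}\Z\ra 0$ is the standard free resolution of the trivial module $\Z$, and injectivity of $\delta$ is just the vanishing of $\mathrm H_1$ of a tree. (To be scrupulous about left versus right multiplication, applying the anti-involution $w\mapsto w^{-1}$ of $\Z[\F_n]$ turns $\sum_i(1-a_i)x_i$ into $-\sum_i x_i^{\star}(a_i^{-1}-1)$, the usual boundary map for the free generating set $\{a_1^{-1},\dots,a_n^{-1}\}$.) Assembling the two summands gives $\ker\psi=(\Z{\bold 1}_{\mathfrak p})^{\oplus n}$.

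I expect the only real bookkeeping to be the module decomposition — freeness of $\F_n\curvearrowright X'$ and that $\psi$ respects $\Z X=\Z X'\oplus\Z{\bold 1}_{\mathfrak p}$ — the injectivity input being classical. One can also avoid group rings: the same reduction leads to the assertion ``$\sum_i(1-a_i)x_i=0$ in $\Z[\F_n]$ forces all $x_i=0$'', which is settled directly by picking a word $w$ of maximal length occurring in the supports of the $x_i$ and examining the coefficient of $a_iw$ (when $w$ does not begin with $a_i^{-1}$) or of $w$ itself (when it does, so that the leading letter $a_i^{-1}$ singles out one index $i$); maximality of the length rules out cancellation, forcing that coefficient to vanish, a contradiction. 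There the obstacle is precisely controlling cancellation in $\Z[\F_n]$, which the extremal choice of $w$ resolves.
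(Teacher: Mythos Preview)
Your proof is correct. Both you and the paper make the same opening move: split off the fixed point $\mathbf{1}_{\mathfrak p}$ and use freeness of the $\F_n$-action on $X'=X\setminus\{\mathbf{1}_{\mathfrak p}\}$ to reduce to single orbits (the paper's functions $\tilde f_x\colon\F_n\to\Z$ are exactly restrictions to one free orbit, i.e.\ one copy of $\Z[\F_n]$ in your decomposition). The difference lies in how the reduced statement --- injectivity of $\delta\colon(x_i)_i\mapsto\sum_i(1-a_i)x_i$ on $\Z[\F_n]^{\oplus n}$ --- is dispatched. The paper proves it by a direct maximal-length-word argument, running through four cases according to which of the sets $\operatorname{supp}\tilde f_{i_x}$ or $a_i\operatorname{supp}\tilde f_{i_x}$ contain the extremal word $w$. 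You instead recognise $\delta$ as the first differential in the standard free resolution of $\Z$ over $\Z[\F_n]$ (the cellular chain complex of the Cayley tree), so injectivity is just $H_1(\text{tree})=0$; the combinatorial alternative you sketch at the end is essentially the paper's case analysis. Your route is shorter and more conceptual, and indeed the paper recalls this very resolution at the start of Section~4 for the K-homology computation --- so your approach has the pleasant side effect of making Lemma~\ref{Lem: Kernel} and the chain complex used in Theorem~\ref{Thm: LHS} visibly the same object, which is consistent with the two sides of the assembly map matching up.
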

\begin{proof}
	Choose $(f_i)_{1\leq i\leq n} \in \bigoplus_{i=1}^n \Z X $. If all $f_i $'s are $\F_n$-invariant,  then obviously $(f_i)_{1\leq i\leq n} \in \operatorname{Ker  (\psi)}$.
		In particular, $\bold 1_{\mathfrak p}$ is $\F_n$-invariant, hence $\bigoplus_{i=1}^n \Z \bold 1_{\mathfrak p} \subset \operatorname{Ker  (\psi)}$. To conclude the statement of the lemma we need to show that these are in fact  the only elements in the kernel. In order to do so we need some preparation.\\
	 We may write
	$ X = \bigcup_{x\in X} \F_n \cdt x$. 
	For 
	$x \in X\setminus{\{{\bold 1}_{\mathfrak p}\}}$, 
	we have that $\F_n \cdt x \cong \F_n$ as $\F_n$ acts freely on $X\setminus{\{{\bold 1}_{\mathfrak p}\}}$. For 
	$f \in \Z X$ and $x\in X\setminus\{{\bold 1}_{\mathfrak p}\}$, we define
	$\tilde f_x \colon \F_n \ra \Z $
	by $\tilde f (w) = f(w\cdt x)$. 
	Note that for $0\neq f \in \Z X$, if  $\tilde{f_x}= 0$ for all $x\in X\setminus{\{{\bold 1}_{\mathfrak p}\}} $, then $f\in \Z \!\cdot\!{\bold 1}_{\mathfrak p}$. Therefore the new statement to prove is
	$$ 0\neq (f_i)_{1\leq i\leq n} \in \operatorname{Ker  (\psi)}\, \,\,\,\text{implies} \,\,\,\, \forall x\in X\setminus{\{{\bold 1}_{\mathfrak p}\}},\,\,\,\,\,({\tilde{f_i}}_x)_{1\leq i\leq n}=0.  $$
	For  $x\in X\setminus{\{{\bold 1}_{\mathfrak p}\}}$, we define
	\begin{equation*}
	{\cal S} = \bigcup_{1 \leq i \leq n} \operatorname{supp} \tilde{f_i}_x \cup  \bigcup_{1\leq i\leq n} a_i \operatorname{supp} \tilde{f_i}_x.
	\end{equation*}
	We assume ${\cal S}$ is non-empty. Note  that for $(f_i)_{1\leq i\leq n}\in \operatorname{Ker  (\psi)}$ and for $w\in \cal S$, there have to be at least two functions such that  $w$ belongs to their supports, otherwise $(f_i)_{1\leq i\leq n}$ can not belong to the kernel.
	Choose a word ${ w \in \cal S }\subset {\F_n} $ with the maximum length. 
	The requirements above lead us to consider the following possibilities,
	for $i\neq j \in {\{1, \cdots, n\}}$: 
	\begin{enumerate}
		\item 
		$ w \in \operatorname{supp }\tilde{f}_{i_x} \cap a_i \operatorname{supp }\tilde{f}_{i_x},$
		\item 
		$ w \in \operatorname{supp } \tilde{f}_{i_x}\cap \operatorname{supp } \tilde{f}_{j_x},$
		\item 
		$w\in a_i \operatorname{supp } \tilde{f}_{i_x}\cap a_j \operatorname{supp } \tilde f_{j_x},\, \text{and }$
		\item 
		$w \in \operatorname{supp } \tilde{f}_{i_x} \cap a_j \operatorname{supp } \tilde{f}_{j_x}.$
	\end{enumerate}
	{\textit {Claim:}} We can deduce contradictions from all these cases.\\
	Accepting this claim immediately implies that  $\tilde{f_i}_x = 0$ for $i = 1, \ldots, n$. As $x\in X\setminus\{1_{\mathfrak p}\}$ is chosen arbitrarily,  we have that $(\tilde{f_i}_x)_{1\leq i\leq n} =0$ for all such $x$.
	Therefore $({f_i})_{1\leq i\leq n}\in \bigoplus_{i=1}^n \Z \cdot {\bold 1}_{\mathfrak p}$, which finishes the proof.
	\\
	{\textit {Proof of the claim:}}
	By an argument on the length of words in $\F_n$, we successively show that all of the above possibilities lead to a contradiction.
	\begin{description}
		\item [\textit{Case 1:}]
		If $w\in \operatorname{supp } \tilde f_{i_x}$, then $a_iw\in a_i \operatorname{supp } \tilde f_{i_x}\subset\cal S$  (\textdagger). Moreover, by the other assumption in 1, $w \in a_i \operatorname{supp } \tilde f_{i_x}$. Hence there exists $u\in \operatorname{supp } \tilde f_{i_x} $ such that $w=a_i u$,  equivalently $u = a_i^{-1}w$ (\textdaggerdbl). Now on the one hand, (\textdagger) forces  $w$ to start with  $a_i^{-1}$ (otherwise, $|a_i w|> |w|$ which is impossible). On the other hand (\textdaggerdbl) implies that $w$ starts with any letter except $a_i^{-1}$ (if not, then  $|u| = |a_i^{-1} w| > |w|$  which is a contradiction). Therefore this case can not happen.
		\item [\textit{Case 2:}]
		Let $w$ be in the supports of ${\tilde f}_{i_x}$ and ${\tilde f}_{j_x}$. Denote by $a_l$ the starting letter of $w$. Either $l\in \{i, j\}$ or $l\neq i,j$. If $l=i$ (respectively, $l=j$), then $a_j w \in a_j \operatorname{supp} \tilde f_{j_x}$ (respectively, $a_i w \in a_i \operatorname{supp} \tilde f_{i_x}$) provides a longer word in $\cal S$, which is impossible. Now if $l\neq i, j$,  then $a_i w \in a_i \operatorname{supp} \tilde f_{i_x}$ provides a longer word in $\cal S$ which is impossible. Hence this case can not happen.
		\item [\textit{Case 3:}]
		If $w$ is in the intersection of the shifted supports, then there exists $u\in \operatorname{supp } {\tilde f}_{i_x}$ and $v\in \operatorname{supp} {\tilde f}_{j_x}$ such that $ a_i u = w = a_j v$. Equivalently, we have that $a_i ^{-1} w = u $ and $a_j^{-1} w =v$. If $w$ starts with either $a_i$ or $a_j$, then $v$ or $u$ respectively gives us a longer word in $\cal S$. If $w$ starts in any other letter than these two, then both $u$ and $v$ give a longer word in $\cal S$. That is a contradiction.
		\item [\textit{Case 4:}]
		Let $w\in \operatorname{supp } {\tilde f}_{i_x}$ and $ w\in a_j \operatorname{supp } {\tilde f}_{j_x}$. Then there exists $u\in \operatorname{supp } {\tilde f}_{j_x}$ such that $a_j^{-1} w = u$. This implies that $w$ has to start with $a_j$. Moreover, the assumption at the beginning that $w\in \operatorname{supp } {\tilde f}_{i_x}$ together with the fact that $w$ starts with  $a_j$ guarantee that $a_i w\in a_i \operatorname{supp } {\tilde f}_{i_x} \subset \cal S$ has a longer length than the maximum. This is impossible.
		\end{description}
\end{proof}	
In the next proposition, we identify the image of the unitary $u_i\in A\rtimes_{\mathrm r}\F_n$, $i=1, \ldots, n$, under $\partial _1$ in the Pimsner-Voiculescu 6-term exact sequence. This generalises Lemma 2 in \cite{PV16}.
\begin{Prop}\label{Prop: K1, generator}
	Let $A$ be a unital $\mathrm C^*$-algebra and write $\F_n=\langle {a_1, \ldots, a_n} \rangle$. For $i\in\{1,\ldots, n\}$,  let $\alpha_i\in \operatorname{Aut}(A)$ define an action  $\alpha$ of $ \F_n$ on $A$. The boundary map  
	$\partial _1\colon\Kl(A\rtimes_{\mathrm r} \F_n)\ra \bigoplus_{i=1}^n \Ko(A)$
	in the Pimsner-Voiculescu 6-term exact sequence behaves in the following way with respect to the unitaries $u_i \in \mathrm C^*_{\mathrm r}(\F_n) \subset A\rtimes_{\mathrm r}\F_n $
	\begin{equation*}
	\partial_1 ([u_i])= (0,\ldots, 0, \underset{i-th\,\, slot}{\underbrace{-[1]}}, 0, \ldots, 0).
	\end{equation*} 
\end{Prop}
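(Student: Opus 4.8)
The plan is to reduce the statement to the well-known computation of the Pimsner--Voiculescu boundary map for a single automorphism (the case $n=1$), where $\partial_1([u])=-[1]\in\Ko(A)$, and then to keep track of which of the $n$ coordinates this class lands in. Recall that the Pimsner--Voiculescu sequence for a free-group crossed product is obtained by iterating (or assembling, via the Mayer--Vietoris-type argument of \cite{PV82}) the ordinary $\Z$-crossed-product sequences for the $n$ generating automorphisms $\alpha_1,\dots,\alpha_n$; the map $\bigoplus_{i=1}^n\Ko(A)\xrightarrow{\sigma_*}\Ko(A)$ is $\sigma_*=\sum_i(\mathrm{id}-(\alpha_i)_*)$, and the $i$-th summand of the target of $\partial_1$ is exactly the $\Ko$-term attached to the automorphism $\alpha_i$. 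So it suffices to show that, under the natural map $\Kl(\mathrm C^*_{\mathrm r}(\F_n))\to\Kl(A\rtimes_{\mathrm r}\F_n)$ and the functoriality of the PV sequence, $[u_i]$ maps into the $i$-th slot with value $-[1]$ and has vanishing components in all other slots.

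Concretely, I would first treat $A=\C$, $\F_n$ acting trivially, so $A\rtimes_{\mathrm r}\F_n=\mathrm C^*_{\mathrm r}(\F_n)$. Here $\Ko(\C)=\Z$, $\sigma_*=0$, and the PV sequence splits into $0\to\Z\to\Ko(\mathrm C^*_{\mathrm r}\F_n)\to 0$ and $0\to\bigoplus_{i=1}^n\Z\xrightarrow{\partial_1}\Kl(\mathrm C^*_{\mathrm r}\F_n)\to\Z\to 0$, wait --- more precisely $\Kl(\mathrm C^*_{\mathrm r}\F_n)\cong\Z^n$ with basis $[u_1],\dots,[u_n]$, and the composite $\bigoplus_{i=1}^n\Ko(\C)\xleftarrow{\partial_1}\Kl(\mathrm C^*_{\mathrm r}\F_n)$ is an isomorphism onto $\bigoplus_{i=1}^n\Ko(\C)$. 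The identification $\partial_1([u_i])=(0,\dots,-[1],\dots,0)$ in this model is exactly the content of \cite{PV16}, Lemma 2, which treated $n=1$; for general $n$ it follows by restricting to the subgroup $\langle a_i\rangle\cong\Z$, using naturality of the PV sequence with respect to the inclusion $\mathrm C^*_{\mathrm r}(\Z)\hookrightarrow\mathrm C^*_{\mathrm r}(\F_n)$, which sends the generating unitary of $\Z$ to $u_i$ and is compatible with the splitting of $\sigma_*$ into its coordinates. Then I would pass to a general unital $A$ with action $\alpha$: the inclusion $\iota\colon\C\hookrightarrow A$, $1\mapsto 1_A$, is $\F_n$-equivariant (the action on $\C$ is trivial), hence induces a morphism of the two PV sequences, and by naturality $\partial_1^{A}([u_i])=\iota_*(\partial_1^{\C}([u_i]))=\iota_*(0,\dots,-[1_\C],\dots,0)=(0,\dots,-[1_A],\dots,0)$, where I use that each coordinate inclusion $\Ko(\C)\to\Ko(A)$ sends $[1_\C]$ to $[1_A]$.

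The one genuine subtlety — and the step I expect to need the most care — is the bookkeeping identifying the $i$-th summand of $\bigoplus_{i=1}^n\Ko(A)$ in the target of $\partial_1$ with the $\Z$-crossed-product term for the automorphism $\alpha_i$, i.e. checking that the restriction map to $\langle a_i\rangle$ really does pick out precisely the $i$-th coordinate and not some combination. This is a matter of unwinding the construction of the PV sequence for free groups in \cite{PV82} (where $A\rtimes_{\mathrm r}\F_n$ is built from the $n$ crossed products $A\rtimes_{\mathrm r,\alpha_i}\Z$ via a Toeplitz/mapping-cone argument), and verifying that the boundary maps of the building blocks assemble coordinatewise. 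Once that compatibility is in place, the computation is immediate. I would therefore structure the proof as: (1) recall the PV sequence and the coordinatewise nature of $\sigma_*$ and $\partial_1$; (2) invoke \cite{PV16}, Lemma 2 (the $n=1$ case) together with naturality along $\langle a_i\rangle\hookrightarrow\F_n$ to handle $A=\C$; (3) conclude for general unital $A$ by naturality along the equivariant unital inclusion $\C\hookrightarrow A$.
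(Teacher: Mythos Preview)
Your plan is correct and shares the paper's core strategy: both reduce to the $n=1$ case via the subgroup inclusion $\langle a_i\rangle\hookrightarrow\F_n$ and invoke \cite{PV16}, Lemma~2. The difference is in emphasis and execution. The paper does not pass through $A=\C$ at all; it works directly with general unital $A$ and carries out in full the step you flag as ``the one genuine subtlety.'' Concretely, the paper goes back to the original construction of \cite{PV82}: it writes down the Toeplitz extension $0\to A\otimes\mathbb K\to\mathcal T_{n,i}\to A\rtimes_{\mathrm r}\F_n\to 0$ attached to the $i$-th generator, forms the fibred product $B_n$ of the $\mathcal T_{n,i}$ over $A\rtimes_{\mathrm r}\F_n$, and exhibits an explicit map of extensions from the single-generator Toeplitz extension (for $A\rtimes_{\alpha_i}\Z$) into the $n$-fold one. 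Naturality of the six-term sequence then gives a commuting square identifying the $i$-th coordinate of $\partial_1$ with the ordinary PV boundary $\partial_{1,i}$ for $\Z$, after which \cite{PV16}, Lemma~2 finishes the computation. In short, the paper's proof \emph{is} the unwinding you say ``is a matter of unwinding the construction''; your reduction to $A=\C$ via the equivariant unital inclusion $\C\hookrightarrow A$ is valid but superfluous, since once the coordinatewise compatibility is established it holds for arbitrary $A$ and the $n=1$ lemma already treats general coefficients.
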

In our proof, we use the original proof of Pimsner-Voiculescu in \cite{PV82}.
\begin{proof}

	Let $i\in \{1,\ldots, n\}$. Consider the following subset in $\F_n$
	$${\cal W}_i=\{\text{reduced words in}\,\, \F_n\text{ that end with }a_i\}.$$
	Note that for $i\neq j \in \{1, \ldots, n\}$, we have that $e\in {\cal W}_ i$, $a_j {\cal W} _i = {\cal W}_i$ and $a_i {\cal W}_i = {\cal W}_i \setminus \{e\}$.  
	
	Assume $A\subset \mathbb{B}(\cal H) $. Let $\mathbb K\subset \mathbb{B}(\cal H)$ denote the $C^*$-algebras of the compact operators on $\cal H$.
	We recall that for $j=1,\ldots, n$  we have that
	 $u_j\in \mathrm C_{\mathrm r}^*(\F_n)\subset A\rtimes_{\mathrm{r}}\F_n \subset \mathbb B(\ell^2( \F_n, \mathcal H)) $.
	Consider the compression of these unitaries to $\ell^2({\cal W}_i, {\cal H}) \subset \ell^2(\F_n, {\cal H})$. These provide us with $n-1$ unitaries $U_j$, for $i\neq j\in \{1, \ldots, n\}$, and one	 non-unitary isometry $S_i$.
	We consider the following $\mathrm{C}^*$-algebra so-called  Toeplitz algebra
	$${\cal T}_{n, i} = \mathrm{C}^*(\{A, U_1,\ldots, U_{i-1}, S_i, U_{i+1}, \ldots, U_n\})\subset \mathbb B(\ell^2({\cal W}_i, {\cal H})).$$
	Let $P_e =\operatorname{I}- S_iS_{i}^* $ be the projection to $\delta _{e}\otimes \cal H$. The ideal generated by   $ {P_e}$ in  $\mathrel{\cal T}_{n, i}$ is isomorphic to $A\otimes \mathbb K$. Now we consider the Toeplitz extension
	$$
	0 \longrightarrow A\otimes \mathbb K \longrightarrow {\cal T}_{n, i}   \stackrel{P_{n, i}} \longrightarrow A\rtimes_{\mathrm r} \F_n\longrightarrow 0.
	$$
 The  surjection $P_{n, i} \colon {\cal T}_{n, i} \ra A\rtimes_{\mathrm r} \F_n $   is defined by
 \begin{align*}
 	 	a &\mapsto a,\,\,  \,\,\,\, a\in A\,\,\,\\
 	U_j &\mapsto u_j,\,\,\, \,\, j\in \{1, \ldots, n\}, \,\,\, j\neq i\\
 	S_i &\mapsto u_i.
 	 \end{align*} 
	Let 
	$
	B_n= \{(t_1,  \ldots, t_n) \in  \bigoplus_{i= 1}^{n} {\cal T}_{n, i} \colon P_{n, 1}(t_1) = \ldots = P_{n, n}(t_n)\}
	$
	be the fibred product of the $C^*$-algebras ${\cal T}_{n, i}$ over $A\rtimes_{\mathrm r} \F_n$.
	Consider the exact sequence introduced on page 153 in \cite{PV82}
	$$
	0 \longrightarrow (A\otimes \mathbb K)^n \longrightarrow B_n \longrightarrow A\rtimes_{\mathrm r} \F_n \longrightarrow 0.
	$$
	Restrict the action $\alpha$ of $\F_n$  to the action by its $i$-th generator, $\alpha_i$, on $A$ and write $ \langle{a_i}\rangle \cong \Z$. 
	Together with the Toeplitz extension, this exact sequence fits into a commuting diagram
	\begin{equation*}
	\xymatrix
	{
		0  \ar[r] &(A\otimes \mathbb K)^n \ar[r]&  B_n \ar[r] &  A\rtimes_{\mathrm r} \F_n \ar[r]  & 0
		\\
		0 \ar[r] & A\otimes\mathbb{K} \ar[r]\ar[u]^{\iota_i} & {\cal T}_{n, i} \ar[r]\ar[u] &   \ar[r] A\rtimes \Z  \ar[r]\ar[u]& 0,
	}
	\end{equation*}
	with $\iota_i$  embedding to the $i$-th component and the middle homomorphism maps $a\in {\cal T}_{n, i}$ to $(a, \ldots, a)$ and maps  $S_i$ to
	 $(U_i, \ldots, U_{i}, S_i, U_{i}, \ldots, U_i)$. 
	Once we have such a commuting diagram the naturality of the 6-term exact sequence
	provides us with a commutative diagram in K-theory
	\\
	\begin{minipage}{\textwidth}
		\begin{equation*}
		\xymatrix@C=0.8pc{
			& \Ko((A \otimes \mathbb K)^n)\ar[rr] &   & \Ko(B_n)\ar[rr] 
			& & {\Ko(A\rtimes_{\mathrm r} \F_n)}\ar[dd] \\
			{\Ko(A\otimes \mathbb K)}\ar[rr]\ar[ur]&\ar[u]& \Ko({\cal T}_{n, i})\ar[rr]\ar[ur] & & {\Ko(A\rtimes \Z)}\ar[dd]\ar[ur]&\\
			&\Kl(A\rtimes_{\mathrm r}\F_n)\ar@{-}[u]&&\Kl(B_n)\ar[ll]&\ar[l] &\Kl((A \otimes \mathbb K)^n)\ar@{-}[l]\\
			\Kl(A\rtimes \Z)\ar[uu]\ar[ur] && \Kl({\cal T}_{n, i})\ar[ll]\ar[ur]& & \Kl(A\otimes \mathbb K).\ar[ll]\ar[ur]&
		}
		\end{equation*}
	\end{minipage}
\\

 The left vertical square fits into the diagram
	$$
	\xymatrix{
	\Ko(A)\ar[r]^{\cong \quad}&
		 \Ko(A\otimes \mathbb K) \ar[r]&
		   \Ko((A\otimes \mathbb K)^n) \ar[r]^{\cong} &
		    \bigoplus_{i=1}^n\Ko(A\otimes \mathbb K)\ar[r]^{\quad\cong}&\bigoplus_{i=1}^n \Ko(A)\\
		&
		\ar[ul]\Kl(A\rtimes \Z) \ar[u]\ar[r] &
		\Kl(A\rtimes_{\mathrm r} \F_n).\ar[u] \ar[urr]&
		 &
	}
	$$
	The triangles at the sides are the natural identifications in K-theory. Hence we get a commutative diagram
	
	$$
	\xymatrix
	{
		\Ko(A)\ar[r]^{\iota_i}& \bigoplus_{i=1}^n \Ko(A)\\
		\Kl(A\rtimes \langle{u_i}\rangle)\ar[r]\ar[u]^{\partial _{1,i}}& \Kl(A\rtimes_{\mathrm{r}}\F_n).\ar[u]_{\partial_1}
	}
	$$      	
	We remark that $\partial_{1, i}$ is the boundary map in the Pimsner-Voiculescu 6-term exact sequence for $\Z$, that is $n=1$ in $\F_n$. Moreover,  we have that  $\partial_1 = \bigoplus_{i=1}^{n} {\partial_{1, i}}$ as mentioned in the statement of Theorem 3.5 in \cite{PV82}.
	Due to Lemma 2 in \cite{PV16}, we have that $\partial_{1, i}([u_i]) = -[1]\in \Ko(A)$ 
	which is mapped by $\iota_i$ to the $i$-th component of the $n$-tuple
	$(0,\ldots,0,-[1],0,\ldots, 0)$. This finishes the proof.
\end{proof}
In the next theorem, we explicitly describe the right-hand side of the Baum-Connes conjecture for $F\wr \F_n$.
\begin{Thm}\label{Thm: RHS}
	Let $\Gamma = F \wr \F_n$ with $F$ a non-trivial finite group. Write $\F_n=\langle{a_1, \ldots, a_n}\rangle$. The K-groups of  $\mathrm C^*_{\mathrm r}(\Gamma)$ can be described as the free abelian groups
    \begin{description}
  	\item $\Ko(\mathrm C^*_{\mathrm r}(\Gamma)) = \Z R$ with $R$ a countable basis indexed by representatives for  $\F_n$-orbits in ${\operatorname{Min\,}} F ^{\,(\F_n)}$.
  	\item $\Kl(\mathrm C^*_{\mathrm r} (\Gamma)) = \Z [u_{1}] \oplus \ldots\oplus \Z  [u_{n}] $. 
    \end{description}	
\end{Thm}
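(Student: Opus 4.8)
The plan is to run the Pimsner--Voiculescu six-term exact sequence recalled above on the action $\alpha$ of $\F_n$ on the coefficient algebra $A=\mathrm C^*(B)$, where $B=\bigoplus_{\F_n}F$; since $B$ is amenable, $\mathrm C^*_{\mathrm r}(\Gamma)=\mathrm C^*(B)\rtimes_{\mathrm r}\F_n$. First I would record the K-theory of the coefficients. Writing $\mathrm C^*(B)=\varinjlim_n\mathrm C^*(B_n)$ with each $B_n$ finite, each $\mathrm C^*(B_n)$ is finite-dimensional, hence $\Kl(\mathrm C^*(B_n))=0$; as $\Kl$ commutes with inductive limits, $\Kl(\mathrm C^*(B))=0$, while $\Ko(\mathrm C^*(B))\cong\Z(\operatorname{Min} F^{(\F_n)})$ by Proposition \ref{Prop: K0locfin}. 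Plugging $\Kl(\mathrm C^*(B))=0$ into the six-term sequence makes $\partial_0$ vanish, so it degenerates to the exactness of
\[
\bigoplus_{i=1}^n\Ko(\mathrm C^*(B))\xrightarrow{\ \sigma_*\ }\Ko(\mathrm C^*(B))\xrightarrow{\ \iota_*\ }\Ko(\mathrm C^*_{\mathrm r}(\Gamma))\longrightarrow 0,
\]
together with the injectivity of $\partial_1\colon\Kl(\mathrm C^*_{\mathrm r}(\Gamma))\hookrightarrow\bigoplus_{i=1}^n\Ko(\mathrm C^*(B))$ and the equality $\operatorname{im}\partial_1=\ker\sigma_*$. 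Thus $\Ko(\mathrm C^*_{\mathrm r}(\Gamma))\cong\operatorname{coker}\sigma_*$ and $\partial_1$ identifies $\Kl(\mathrm C^*_{\mathrm r}(\Gamma))$ with $\ker\sigma_*$, where $\sigma_*=\sum_{i=1}^n(\operatorname{Id}-(\alpha_i)_*)$.

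Next I would recognise $\sigma_*$ as a combinatorial map. Under the isomorphism of Proposition \ref{Prop: K0locfin}, the automorphism $(\alpha_i)_*$ of $\Ko(\mathrm C^*(B))\cong\Z(\operatorname{Min} F^{(\F_n)})$ is induced by the index shift $F_w\mapsto F_{a_iw}$, i.e.\ it is left translation by $a_i$ on $\operatorname{Min} F^{(\F_n)}$. Hence $\sigma_*$ is precisely the map $\psi$ of Lemma \ref{Lem: Kernel} for $X=\operatorname{Min} F^{(\F_n)}$ with the finite pointed set $(\operatorname{Min} F,p_F)$. Lemma \ref{Lem: Kernel} then gives $\ker\sigma_*=\bigoplus_{i=1}^n\Z\mathbf 1_{p_F}$, and since $\operatorname{im}\sigma_*=\langle m-a_i\cdot m:m\in\Z X,\ 1\le i\le n\rangle$, Lemma \ref{Lem: repres} (equivalently Remark \ref{Rem: inv/coinv}) gives $\operatorname{coker}\sigma_*=(\Z X)_{\F_n}\cong\Z R$, with $R$ the countable set of admissible representatives for $\F_n$-orbits in $\operatorname{Min} F^{(\F_n)}$. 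This establishes the description of $\Ko(\mathrm C^*_{\mathrm r}(\Gamma))$.

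It then remains to exhibit the generators of $\Kl$. By Proposition \ref{Prop: K1, generator}, $\partial_1([u_i])=(0,\ldots,0,-[1],0,\ldots,0)$ with $-[1]$ in the $i$-th slot, where $[1]$ is the class of $1\in\mathrm C^*(B)$; by Proposition \ref{Prop: K0locfin} this class corresponds to $\mathbf 1_{p_F}$. Therefore the $n$ elements $\partial_1([u_i])$ are, up to sign, exactly the free basis of $\ker\sigma_*$ produced above. Since $\partial_1$ is injective with image $\ker\sigma_*$, the classes $[u_1],\ldots,[u_n]$ form a $\Z$-basis of $\Kl(\mathrm C^*_{\mathrm r}(\Gamma))$, i.e.\ $\Kl(\mathrm C^*_{\mathrm r}(\Gamma))=\Z[u_1]\oplus\ldots\oplus\Z[u_n]$.

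Most of the technical content has been isolated in the preceding results, so I expect the only genuinely delicate point to be the equivariance bookkeeping in the second step: one must check that the isomorphism $\Ko(\mathrm C^*(B))\cong\Z(\operatorname{Min} F^{(\F_n)})$ of Proposition \ref{Prop: K0locfin} intertwines each $(\alpha_i)_*$ with left translation by $a_i$ on $\operatorname{Min} F^{(\F_n)}$, so that the abstract Pimsner--Voiculescu maps $\sigma_*$ coincide on the nose with the combinatorial maps $\psi$ and $m\mapsto m-a_i\cdot m$. Granting that identification, the theorem follows by assembling Lemma \ref{Lem: Kernel}, Lemma \ref{Lem: repres}, Proposition \ref{Prop: K0locfin} and Proposition \ref{Prop: K1, generator}.
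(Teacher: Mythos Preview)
Your proposal is correct and follows essentially the same route as the paper: run the Pimsner--Voiculescu sequence for $\mathrm C^*(B)\rtimes_{\mathrm r}\F_n$, use $\Kl(\mathrm C^*(B))=0$ and Proposition \ref{Prop: K0locfin} to reduce to computing $\ker\sigma_*$ and $\operatorname{coker}\sigma_*$ on $\Z(\operatorname{Min}F^{(\F_n)})$, then invoke Lemma \ref{Lem: Kernel}, Remark \ref{Rem: inv/coinv}/Lemma \ref{Lem: repres}, and Proposition \ref{Prop: K1, generator} exactly as you do. The paper simply asserts the identification $\sigma_*=\psi$ that you flag as the one point requiring care, so your write-up is in fact slightly more explicit about where the equivariance check enters.
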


\begin{proof}
Let $B= \oplus_{ \F_n} F$. We may write $\mathrm C^*_{\mathrm r}(\Gamma)= \mathrm C^* (B)\rtimes_{\mathrm r} \F_n$.	
Due to Proposition \ref{Prop: K0locfin},  we have that 
$\Ko(\mathrm C^*(B))= \Z \, (\operatorname{Min} F^{\,(\F_n)})$. Moreover, $\mathrm{C}^*(B)$ is an AF-algebra as $\mathrm{C}^*(B)=\bigotimes_{\F_n}\C F$ hence  $\Kl(\mathrm{C}^*(B)) =0$. 	
Substituting these K-groups in the Pimsner-Voiculescu 6-term exact sequence, we get the diagram
    \begin{displaymath}
         \xymatrix
         {
         	\bigoplus_{i=1}^n \Z \, (
         	\operatorname{Min} F^{\,(\F_n)})\ar[r]^-{\operatorname{\sigma_*}}&\Z (\operatorname{Min} F^{\,(\F_n)}) \ar[r]^ -{\iota_*} & K_0(\mathrm C^*_{\mathrm r}\Gamma)\ar[d]^{\partial_0} \\
         	K_1(\mathrm C^*_{\mathrm r}(\Gamma))\,\,\,\,\ar[u]^{\partial_1}  &\,\,\,\,\,0\,\,\,\,\,\,\,\,\ar[l] & \,\,\,\, 0 .\ar[l]
         }
    \end{displaymath}
We start from the left-hand side of the diagram. Injectivity of $\partial_1$ implies that
 $\Kl(\mathrm C_{\mathrm r}^*(\Gamma))= \operatorname{Im} \partial_1$.
 By exactness of the diagram  at 
  $\bigoplus_{i=1}^n \Z \, (\operatorname{Min} F^{\,(\F_n)}) $, we have that
  $\operatorname{Im}\partial_1 = \operatorname{Ker} ({\operatorname{ \sigma_{*}}})$. Due to Lemma \ref{Lem: Kernel}, for 
   $X= \operatorname{Min} F^{\,(\F_n)}$, $ \mathfrak p = p_F$ and the operator
    $\psi = \sigma_*$, the kernel is generated by $n$ copies of $\bold 1_{p_{F}}\in \operatorname{Min} F^{\,(\F_n)} $. Thus we have
     $$\operatorname{Ker (\sigma_*)} =  \, \Z{\bold 1} p_F \oplus \cdots  \oplus\Z {\bold 1} p_F.$$
   We recall from Proposition \ref{Prop: K0locfin} that the element $\bold 1_{p_F}$ corresponds to $[1] \in \Z (\operatorname{Min} F^{(\F_n)})$.  Moreover,  we observed in Proposition \ref{Prop: K1, generator} that 
   $\partial_1 ([u_{i}])  = (0, \ldots, -[1], 0,\ldots, 0)$, 
   therefore 
   $$\Kl(C^*_{\mathrm r} (\Gamma))  = \Z  [u_{1}] \oplus \cdots\oplus \Z  [u_{n}].$$
In order to compute 
$\Ko(\mathrm C^*_{\mathrm r}(\Gamma))$,
 we focus on the right-hand side of the diagram.  Surjectivity of 
 $\iota_*$ 
 implies that    
    $
       \Ko(\mathrm C^*_{\mathrm r}(\Gamma)) = {\Z \operatorname{(Min} F^{\,(\mathbb F_n)})} / {\operatorname{Ker}{\iota_*}}
   $.
By exactness  of the diagram at $ \Z \, (
\operatorname{Min} F^{\,(\F_n)})$, we have that 
   $
  \operatorname{Ker}{\iota_*} = \operatorname{Im(\sigma_*)}
  $.
Therefore
   $$
      \Ko(C^*_{\mathrm r}(\Gamma)) = {\Z \operatorname{(Min} F^{\,(\mathbb F_n)})} / \operatorname{Im (\sigma_*)}.
   $$
Furthermore, we identify 
     $
        \operatorname{Im( \sigma_*)} =\, \langle{f- a_i\cdot f \colon 
        	 f\in \Z \operatorname{(Min} F^{\,(\mathbb F_n)}), 1\leq i\leq n}\rangle.
     $  
  Using Remark \ref{Rem: inv/coinv} and  Lemma \ref{Lem: repres}, we  can then express  this quotient  as the free abelian group on the (countable) set of representatives for $\F_n$-orbits, that is $\mathrm K_0(\mathrm C^*_{\mathrm r}(\Gamma))= \Z R$ with the described basis. 
\end{proof}
\section{K-homology of $\uE \Gamma$}
In this section, we first present a suitable chain complex in order to define  our homological groups. Later, we construct an explicit 2-dimensional model for
 $\uE F\wr \F_n$ and finally, we compute K-homology of 
 $\uE \Gamma$.

	Let $e\in \F_n$ denote the neutral element. Consider the free right $\F_n$-module $\bigoplus_{i=1}^n \Z \F_n$. The elements  
	$e_j= (0, \ldots, 0, \underset{j-th\,\,slot}{ \underbrace{e}},  0,\ldots, 0)\in \bigoplus_{i=1}^n \Z \F_n$, for $1\leq j\leq n$, form the canonical basis 
	 for this free module. 
	 Example I.4.3 in \cite{BCoh82} provides us with the free resolution 
	\begin{equation*}
	0
	\longrightarrow
	\bigoplus_{i=1}^n \Z \F_n
	\stackrel{\delta}\longrightarrow
	\Z \F_n
	\stackrel{\epsilon}\longrightarrow 
	\Z
	\longrightarrow 0,
	\end{equation*}
	with the augmentation $\epsilon$
	and the boundary map $\delta$  satisfying $j\in \{1, \ldots, n\}$, 
	$$\delta(e_j)= e - a_j. $$
	Let $M$ be a left $\F_n$-module. 
	We recall that $ \Z \F_n \bigotimes_{\Z \F_n}M \cong M$.
	Applying the functor $ {-\bigotimes_{\Z\F_n} M}$ to the above resolution provides us with the chain complex 
	\begin{equation*}
	0\longrightarrow\bigoplus_{i=1}^nM\stackrel{\delta }\longrightarrow M\longrightarrow 0.
	\end{equation*}
	Note that in the complex above, by abusing the notation of $\delta$,  we have that $\delta (m_1, \ldots, m_n)= \sum_{j=1}^{n} {m_j-a_jm_j}$.\\
	Therefore, we can write the first two homology groups
	$$
	\mathrm H_0(\F_n; M) = M/{\operatorname{Im (\delta)}}
		\,\,\, 
		 \text{and} 
		 \,\,\,
	\mathrm H_1(\F_n; M) = {\operatorname{Ker}(\delta)}\leq \bigoplus_{i=1}^n M.
	$$
\subsection{A $2$-dimensional model for $\uE \Gamma$}
In this part, we construct a concrete $2$-dimensional model for $\uE \Gamma$. As we will see,  this model comes from the model for $\uE B$. This construction generalises the  one for $\uE\,( F\wr\Z)$ in \cite{FPV16} and it is derived from \cite{F11} for infinite cyclic extensions. \\
Let B be our locally finite group represented as a co-limit of an increasing sequence $B_n$. 
We know that $\uE B$ has a one dimensional model which is a tree T. Let $V$ and $E$ respectively denote  the set of vertices and edges of $T$. The tree  T can be described as follows:
\begin{itemize}
	\item $V=E=\coprod_{n>0} B/B_n$
	\item For $b\in B$, the vertices  $bB_n, \,bB_{n+1}\in V$ are connected via the edge labelled by $bB_n\in E$.
\end{itemize}
Consider the Cayley graph of $\F_n$. Intuitively speaking, the idea of construction is to install the tree T coming from a model for $\uE B$, over all vertices of the Cayley graph of $\F_n$ and then identify certain subcomplexes of these trees in a compatible way that the resulting complex meets all requirements of being a model for $\uE \Gamma$.
\\
Denote by T$_w$ the tree T over the word $w\in \F_n$, and by $bB_{n, w}$ a vertex on the $n$-th level of the filtration of  the tree T$_w$. For $w\in \F_n$, we denote by $[w, wa_j]$  the edge from $w$ to $wa_j$ on the Cayley graph of $\F_n$.\\
We define a  $B$-action on T$_w$: for
$b, f\in B$ and  $w\in \F_n$
$$ f\cdot_w bB_{n, w} := (w^{-1}\cdot f) bB_{n,w},\,\,\,\,\,  n\in \N. $$
Note that each T$_w$ is a model for $\uE B$ as well.\\
For $j=1,\ldots, n$, we define the gluing maps between neighbouring trees
\begin{equation*}
\varphi ^{a_j}_{w}\colon \mathrm{T}_w \ra \mathrm{T}_{wa_j}:\,\,\,\, bB_{n, w}\mapsto ({a_j^{-1}}\cdot b) B_{n+1, wa_j}.
\end{equation*}
It can easily be  checked that the $\varphi ^{a_j}_{w}$ are $B$-equivariant,  that is
\begin{equation*}
\varphi ^{a_j}_{w}(f\cdot_w bB_{n,w})= f\cdot_{wa_j} \varphi ^{a_j}_{w}(b B_{n, wa_j}).
\end{equation*}
For $w\in \F_n$, we may identify an edge $[w, wa_i]$ with the interval $[0, 1]$. We define
\begin{equation*}
\tilde {\mathrm Z} := 
\bigcup_{j=1}^n 
\bigcup_
{\substack{ w\in \F_n \\
		|wa_j|>|w|}}
(\mathrm T_w \times [w, wa_j] )
\cup
\bigcup_{j=1}^n 
\bigcup_
{\substack{ w\in \F_n \\
		|wa_{j}^{-1}|>|w|}}
(\mathrm T_w \times [w, wa_{j}^{-1}]).
\end{equation*}
 Each non-trivial word on the Cayley graph of $\F_n$ has $2n-1$ possibilities of increasing its length. We then identify points on the boundaries of edges in $\tilde{\mathrm Z}$.
More explicitly, if we assume that $w$ ends with $a_i$, then for $ k, j\in \{1, \ldots, n\}$   with $ k \neq j\neq i$ we identify
\begin{align*}
{\mathrm T}_w\times[w, wa_j^{\pm 1}]&\ni&(bB_{n, w}, wa_j^{\pm 1}) 
&\sim(\varphi ^{a_j^{\pm 1}}_{w}(bB_{n, w}), wa_j^{\pm 1})&\in& \mathrm{T}_{wa_j}\times[wa_j^{\pm 1}, wa_j^{\pm 1}a_k],\\
T_w\times[w, wa_i]&\ni& (bB_{n,w}, wa_i)
&\sim (\phi_w^{a_i}(bB_{n, w}), wa_i)&\in& T_{wa_i}\times [wa_i, wa_i^2].
\end{align*}
For the trivial word  we hence consider all $2 n$ identifications.
We define the mapping telescope $\mathrm Z:={\tilde{\mathrm{Z}}/{\sim}}$.

This quotient space is a candidate to be the desired model for $\uE \Gamma$. The group $\Gamma$ acts  on it.
Indeed, the actions of $B$ and $\F_n$ on $\mathrm Z$ combine into the conjugation action $\beta$ of $\Gamma$ on $\mathrm Z$.
To see this, we define the following actions: 
\begin{align*}
B \stackrel{\theta}\curvearrowright \mathrm Z 
&\colon
\theta(f) (bB_{n,w}, t_{[w,\, wa_i]})= (({w^{-1}}\cdot f)bB_{n, w}, t_{[w,\, wa_i]}),
\\
\F_n \stackrel{\eta}\curvearrowright \mathrm Z 
&\colon
\eta(a_j)(bB_{n, w}, t_{[w,\, wa_i]})= (bB_{n, a_jw}, t_{[a_jw,\, a_jwa_i]}),
\end{align*}
where $t_{[w, wa_i]}$ denotes a point on the  edge $[w, wa_i]$, hence $t_{[a_jw, a_jwa_i]}$ is the  point moved on the shifted edge.
Moreover, 
\begin{align*}
\eta(a_j)\theta(f)\eta(a_j)^{-1} (bB_{n, w}, t_{[w,\, wa_i]})
&=
\eta(a_j)\theta(f)(bB_{n, a_j^{-1}w}, t_{[a_j^{-1}w,\, a_j^{-1}wa_i]})\\
&=
\eta(a_j)(\theta(f)(bB_{n, a_j^{-1}w}), t_{[a_j^{-1}w,\, a_j^{-1}wa_i]})\\
&= 
\eta(a_j)(({(a_j^{-1}w)^{-1}}\cdot f)(bB_{n,\, a_j^{-1}w}),\, t_{[a_j^{-1}w,\, a_j^{-1}wa_i]})\\
&=
\eta(a_j)(({w^{-1}}\cdot ({a_j}\cdot f))(bB_{n,\, a_j^{-1}w}), \,t_{[a_j^{-1}w,\, a_j^{-1}wa_i]} )\\
&= 
(({w^{-1}}\cdot ({a_j}\cdot f))(bB_{n,\, w}), \,t_{[w,\, wa_i]} )\\
&=
\beta_{a_j}(f)(bB_{n,\, w}, t_{[w,\, wa_i]}).
\end{align*}
Some observations are pertinent here.
\begin{itemize}
\item $\F_n$ acts freely.
\item The action $\eta$ has a fundamental domain
$D = \bigcup_{j=1}^n \mathrm T_e \times [e, a_j)$.
\item Vertex stabilisers are finite, hence $\Gamma$ acts properly on Z. 
\end{itemize}
Combining these observations, we have the following proposition.
\begin{Prop}
	The topological space $ \mathrm Z$ is a $2$-dimensional model for $\uE \Gamma$.	
\end{Prop}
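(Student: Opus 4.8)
The plan is to verify the three defining properties of a model for $\uE\Gamma$ in the sense of the definition recalled at the start of Section 2: the space $\mathrm Z$ must be a $\Gamma$-CW-complex whose isotropy groups on finite subgroups are contractible (equivalently, the corresponding fixed-point sets are contractible) and empty on infinite subgroups. Since the bulleted observations already record that $\Gamma$ acts properly, that $\F_n$ acts freely with fundamental domain $D=\bigcup_{j=1}^n \mathrm T_e\times[e,a_j)$, and that vertex stabilisers are finite, the bulk of the proof is to package these facts correctly rather than to discover anything new.

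\textbf{Step 1: CW-structure and dimension.} First I would observe that each $\mathrm T_w$ is a tree, hence a $1$-dimensional CW-complex, so $\mathrm T_w\times[w,wa_j^{\pm1}]$ is a $2$-dimensional CW-complex; the gluing $\sim$ identifies a subcomplex $\mathrm T_{wa_j}\times\{wa_j^{\pm1}\}$ of one block with the image $\varphi^{a_j^{\pm1}}_w(\mathrm T_w)\times\{wa_j^{\pm1}\}$ inside a neighbouring block along a cellular map, so the quotient $\mathrm Z$ inherits a CW-structure of dimension $\le 2$. Because the $B$-actions $\theta$ on each $\mathrm T_w$ are simplicial and the maps $\varphi^{a_j}_w$ are $B$-equivariant (checked in the text), and because $\eta$ permutes blocks while respecting the identifications, the combined action $\beta$ of $\Gamma$ is by CW-automorphisms; properness (from finite vertex stabilisers) upgrades this to a proper $\Gamma$-CW-complex. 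One should note that $\mathrm Z$ is genuinely $2$-dimensional and not $1$-dimensional, which is the point of constructing it — but for the statement ``$2$-dimensional model'' it suffices that $\dim\mathrm Z\le 2$.

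\textbf{Step 2: stabilisers are finite or the whole structure is proper.} Here I would invoke the observation that vertex stabilisers are finite. A point of $\mathrm Z$ lies in some block $\mathrm T_w\times[w,wa_i]$; its stabiliser in $\F_n$ is trivial since $\F_n$ acts freely permuting the $w$-coordinate, and its stabiliser in the normal subgroup $B$ fixes a point $bB_{n,w}$ of the tree $\mathrm T_w$, hence is conjugate (via $w$) to the stabiliser $bB_nb^{-1}$ of a point of $\uE B=\mathrm T$, which is a finite group since $B_n$ is finite. A short argument using the semidirect-product structure $\Gamma=B\rtimes\F_n$ then shows every point-stabiliser in $\Gamma$ is finite, so infinite subgroups have empty fixed-point sets.

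\textbf{Step 3: fixed-point sets of finite subgroups are contractible.} This is the main obstacle and the one I would spend the most care on. Let $H\le\Gamma$ be finite. Because $\F_n$ acts freely, $H$ must intersect $\F_n$ trivially — more precisely, the image of $H$ under $\Gamma\to\F_n$ is a finite subgroup of $\F_n$, hence trivial, so $H\le B$. Thus $H$ is a finite subgroup of $B$, and it fixes pointwise the telescope direction (the $\F_n$-coordinate), acting only fibrewise on each $\mathrm T_w$ through the twisted action $\theta$. On each fibre $\mathrm T_w$, which is itself a model for $\uE B$, the fixed-point set $\mathrm T_w^H$ is a non-empty subtree, hence contractible; and the gluing maps $\varphi^{a_j}_w$ are $B$-equivariant, so they restrict to maps $\mathrm T_w^H\to\mathrm T_{wa_j}^H$ compatible with the identifications. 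Therefore $\mathrm Z^H$ is itself a mapping-telescope-type space built over the Cayley graph of $\F_n$ with contractible fibres $\mathrm T_w^H$ glued along the (cellular, hence cofibration) maps $\varphi^{a_j}_w|_{\mathrm T_w^H}$; since the Cayley graph of $\F_n$ is a tree and each fibre is contractible, a standard argument (collapse each fibre, then collapse the tree) shows $\mathrm Z^H$ is contractible. I would phrase this last point either by an explicit deformation retraction onto a single fibre $\mathrm T_e^H$ (pushing along the telescope toward the root) followed by contraction of that subtree, or by citing the analogous verification in \cite{FPV16} for $F\wr\Z$ and \cite{F11} for infinite cyclic extensions, of which this is the $\F_n$-analogue. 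Combining Steps 1--3 gives that $\mathrm Z$ satisfies the universal property characterising $\uE\Gamma$, and since it has dimension $\le 2$, it is the desired $2$-dimensional model.
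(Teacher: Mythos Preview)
Your proposal is correct and follows exactly the strategy the paper adopts: the paper's own proof is the single line ``It is verbatim the proof of Proposition 2 in \cite{FPV16}'', and what you have written out in Steps~1--3 --- the $\Gamma$-CW structure, finiteness of all point-stabilisers via the free $\F_n$-action on the base tree, and contractibility of $\mathrm Z^H$ for finite $H\le B$ as a telescope of contractible subtrees over the Cayley tree --- is precisely that argument, transported from $\Z$ to $\F_n$. Your closing remark that one could instead cite \cite{FPV16} and \cite{F11} is thus literally what the paper does.
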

\begin{proof}
It is verbatim the proof of Proposition 2 in \cite{FPV16}.
\end{proof}

In the rest of this section we compute the K-homology of $\uE\Gamma$, which is the Bredon homology of $\Gamma= B\rtimes \F_n$ with coefficients in $R_{\C}$. For this we appeal to the Martinez spectral sequence recalled in Preliminary Section.
 Consider the split exact sequence $0\ra B\ra \Gamma\ra \F_n\ra 0$  associated to the group $\Gamma= B\rtimes\F_n$. 
 Obviously, the family $\bar{\mathfrak{H}}$ of finite subgroups of $\F_n$ only consists  of the trivial group.
 Therefore the pull-back of this family only consists  of the group $B$.  Hence the second page of this spectral sequence is
\begin{equation*}
E^2_{p, q}  = \mathrm H_p ^{\tilde{\mathfrak{H}}}(\F_n; \operatorname{H}_q^{\mathfrak F_B}(B; R_{\C}))= 
\operatorname{H}_p(\F_n; \operatorname{H}_q^{\mathfrak{F}_B}(B; R_{\C})).
\end{equation*}
Our aim is to compute these groups.
\\
 We recall that the dimension of  models for $\uE B$ and $\uE \F_n$ is one. Hence on the one hand, $\mathrm E_{p, q}^2$ is trivial for $p \geq 2$, and on the other hand, by Theorem \ref{Thm: Misislin 1, dim EG=1}, Bredon homology of  locally finite group $B$ is trivial for $q\geq 1$. Therefore, the only non-zero terms are $p=0, 1$ and $q=0$. Particularly, $\mathrm E^2_{0, 0}$ and $\mathrm E^2_{1, 0}$ are the only non-trivial terms of the $E^2$-page. This means that the $E^2$-page is concentrated in horizontal axis and the spectral sequence collapses in this page as there is no differential. Accordingly, we have $\mathrm E^{\infty}_{p, q} = \mathrm E^2_{p, q} $ for $p, q\geq 0$. We recall that by Mart\`inez's result, the spectral sequence converges to $\mathrm H ^{\mathfrak F}_ {p+q}(\Gamma; R_{\C}) $.
Together with the discussion at the beginning of this part, we may identify Bredon homology groups with homology groups of $\F_n$ with coefficient in the free abelian group $\operatorname{H}_0^{\mathfrak F_B}(B; R_{\C})$.  In particular, we  need to compute
\begin{equation}
 \HoF(\Gamma; R_{\C})
  =  \mathrm E_{0, 0}^{\infty}
  =  \mathrm E^2_{0, 0} 
   = {\mathrm H}_0(\F_n; \operatorname{H}_0^{\mathfrak F_B}(B; R_{\C})),   
\end{equation}
 and 
\begin{equation}
  \HlF(\Gamma; R_{\C})
   =  \mathrm E_{1, 0}^{\infty}
   = \mathrm E^2_{1, 0} 
   = \mathrm H_1(\F_n; \operatorname{H}_0^{\mathfrak F_B}(B; R_{\C})).
\end{equation}
In the next theorem we explicitly describe the left-hand side of the Baum-Connes assembly map for $F\wr\F_n$.
 \begin{Thm}\label{Thm: LHS}
Let $\Gamma = F \wr \F_n$ with $F$ a non-trivial finite group. Write $\F_n=\langle{a_1, \ldots, a_n}\rangle$. The topological K-groups of  $\uE \Gamma$ can be described as two free abelain groups.
\begin{description}
	\item $\KoL(\uE \Gamma)= \Z R' $ with $R'$ a countable basis indexed by representatives for $\F_n$-orbits in $\hat F ^{(\F_n)}$.
	\item $\KlL(\uE \Gamma)= \Z [v_1]\oplus\ldots\oplus \Z [v_n]$,  where $[v_i]$ is the canonical generator of $\KlZ(\uE \Z)$ via the identification $\Z =\langle{a_i}\rangle$. Indeed, the inclusions $\langle{a_i}\rangle \hookrightarrow \F_n \hookrightarrow \Gamma$ give rise to an inclusion  
	$\langle{v_i}\rangle = \KlZ(\uE \Z) \hookrightarrow \KlFn(\uE\F_n)\cong \KlL(\uE\Gamma)$.
\end{description}
 \end{Thm}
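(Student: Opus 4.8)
The plan is to feed the description of the $B$-equivariant Bredon homology from Proposition~\ref{Prop: K0locfin} into the spectral-sequence identities (1)--(2) established just above the theorem, and then unwind the resulting group homology of $\F_n$ using the free resolution and the kernel/coinvariant machinery of Section~2. Concretely, I would proceed as follows.

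\smallskip

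First, record the coefficient module. By Theorem~\ref{Thm: Misislin 1, dim EG=1} applied to the locally finite group $B$ (whose model for $\uE B$ is the tree $T$), together with the first bullet of Proposition~\ref{Prop: K0locfin}, we have $\operatorname{H}_q^{\mathfrak F_B}(B;R_{\C})=0$ for $q\geq 1$ and $\operatorname{H}_0^{\mathfrak F_B}(B;R_{\C})\cong \Z(\hat F^{(\F_n)})$ as an abelian group. The crucial point is to identify the residual $\F_n$-action on this module: unwinding the conjugation action $\beta$ of $\Gamma$ on $B$ (left multiplication on indices), the induced $\F_n$-action on $\Z(\hat F^{(\F_n)})$ is exactly the left-multiplication action $\F_n\curvearrowright \hat F^{(\F_n)}$ from Lemma~\ref{Lem: repres} (with pointed set $(\hat F,1)$), linearised. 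So $M:=\operatorname{H}_0^{\mathfrak F_B}(B;R_{\C})\cong \Z X$ with $X=\hat F^{(\F_n)}$ as an $\F_n$-module.

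\smallskip

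Second, compute the two nonzero entries of the $E^2=E^\infty$ page using the length-$2$ chain complex $0\to\bigoplus_{i=1}^n M\xrightarrow{\delta} M\to 0$ from the start of Section~4, where $\delta(m_1,\dots,m_n)=\sum_j m_j-a_j m_j$. For $\mathrm H_0$: by (1), $\HoF(\Gamma;R_{\C})=M/\operatorname{Im}\delta=(\Z X)_{\F_n}$, which by Remark~\ref{Rem: inv/coinv} and Lemma~\ref{Lem: repres} (the splitting $\Z X=\langle m-a_i\cdot m\rangle\oplus\Z R'$ with $R'$ the admissible-tree representatives) equals $\Z R'$ with $R'$ indexed by $\F_n$-orbits in $\hat F^{(\F_n)}$. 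Then Theorem~\ref{Thm: Misilin 2, dim EG < 2} gives the short exact sequence $0\to\HoF(\Gamma;R_{\C})\to\KoL(\uE\Gamma)\to\operatorname{H}_2^{\mathfrak F}(\Gamma;R_{\C})\to 0$; since $\dim\uE\Gamma\leq 2$ forces $\operatorname{H}_2=E^\infty_{2,0}\oplus E^\infty_{1,1}\oplus E^\infty_{0,2}=0$ (all three vanish: $p\geq 2$ kills the first, $q\geq 1$ kills the last two), we get $\KoL(\uE\Gamma)\cong\Z R'$. For $\mathrm H_1$: by (2), $\HlF(\Gamma;R_{\C})=\operatorname{Ker}\delta=\operatorname{Ker}(\psi)$ in the notation of Lemma~\ref{Lem: Kernel} with $(F,\mathfrak p)=(\hat F,1)$, which Lemma~\ref{Lem: Kernel} identifies as $\bigoplus_{i=1}^n\Z\bold 1_{1}$, the $n$ copies generated by the constant map at the trivial representation. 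Theorem~\ref{Thm: Misilin 2, dim EG < 2} then gives $\KlL(\uE\Gamma)\cong\HlF(\Gamma;R_{\C})\cong\Z^n$.

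\smallskip

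Third, pin down the generators as claimed. The $i$-th generator of $\operatorname{Ker}\psi$ is the element $(0,\dots,\bold 1_{1},\dots,0)\in\bigoplus_{i=1}^n M$ sitting in the $i$-th slot. Under the chain-complex description, this is precisely the image of the canonical degree-one generator of $\operatorname{H}_1(\Z;R_{\C}(\{e\}))$ along $\langle a_i\rangle\hookrightarrow\F_n$ (for $\Z$ the complex is $0\to M\xrightarrow{1-a_i}M\to 0$ and the trivial-index constant map is $\F_n$-invariant hence survives), and by naturality of the spectral sequence and of the Misislin isomorphism this corresponds to the generator $[v_i]$ of $\KlZ(\uE\Z)$ pushed forward along $\langle a_i\rangle\hookrightarrow\F_n\hookrightarrow\Gamma$. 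This gives the stated basis $\Z[v_1]\oplus\dots\oplus\Z[v_n]$.

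\smallskip

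\textbf{Main obstacle.} The routine parts are the spectral-sequence bookkeeping and the applications of the quoted lemmas; the genuinely delicate step is the identification of the $\F_n$-module structure on $\operatorname{H}_0^{\mathfrak F_B}(B;R_{\C})$ and the compatibility of that structure with the isomorphism of Proposition~\ref{Prop: K0locfin}. One must check that the conjugation action of a generator $a_i\in\F_n$ on $B=\bigoplus_{\F_n}F$ — which shifts the index $w\mapsto a_i w$ — translates under $\pi\mapsto\otimes_{w\in\operatorname{supp}\pi}\pi_w$ into exactly the left-translation action on $\hat F^{(\F_n)}$, so that Lemma~\ref{Lem: repres} and Lemma~\ref{Lem: Kernel} apply verbatim with $(\hat F,1)$ in place of an abstract pointed set; once this is verified everything else is formal.
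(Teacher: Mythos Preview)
Your proposal is correct and follows essentially the same route as the paper: reduce to $\mathrm H_i(\F_n;\Z(\hat F^{(\F_n)}))$ via the collapsed Mart\'inez spectral sequence and Theorem~\ref{Thm: Misilin 2, dim EG < 2}, then read off the coinvariants and the kernel using Remark~\ref{Rem: inv/coinv}/Lemma~\ref{Lem: repres} and Lemma~\ref{Lem: Kernel} respectively. The only cosmetic difference is in the generator identification for $\Kl$: the paper writes out an explicit composite $\KlZ(\uE\Z)\cong\mathrm K_1(\mathrm S^1)\to\mathrm K_1(\bigvee_n\mathrm S^1)\cong\mathrm H_1(\F_n;\Z)\to\mathrm H_1(\F_n;\Z\hat F^{(\F_n)})\cong\KlL(\uE\Gamma)$ rather than appealing to naturality abstractly, and your flagged ``main obstacle'' (the $\F_n$-module identification on $\mathrm H_0^{\mathfrak F_B}(B;R_{\C})$) is taken for granted in the paper.
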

  \begin{proof}
     Due to Theorem \ref{Thm: Misilin 2, dim EG < 2} and equations in $(1)$ and $(2)$, we have that
   $\mathrm K_i^{\Gamma}(\uE \Gamma)\cong \mathrm{H}_i^{\mathfrak{F}}(\Gamma; R_{\C}) = \mathrm H_i(\F_n; \operatorname{H}_0^{\mathfrak F_B}(B; R_{\C}))$ for $i=0, 1$. In order to compute the homological groups with the appropriate coefficients, we tensor  the free resolution, at the beginning of this section, with  the  $\F_n$-module $\operatorname{H}_0^{\mathfrak F_B}(B; R_{\C})$.  
   We recall that by Theorem \ref{Thm: Misislin 1, dim EG=1} and Proposition \ref{Prop: K0locfin}, we have that
     $ \operatorname{H}_0^{\mathfrak F_B}(B; R_{\C})\cong \KoB(\uE B)\cong  \Z \hat F^{(\F_n)}$.
   Therefore, we have  
   \begin{equation*}
   \mathrm{K}_0^{\Gamma}(\uE{\Gamma})\cong \mathrm {H}_0^{\mathfrak{F}}(\Gamma; R_{\C}) =
   \frac{\Z {\hat F}^{(\F_n)}}{\operatorname{Im}(\delta)} =
      \frac{\Z {\hat F}^{(\F_n)}}{\langle f- a_i.f: f\in \Z \hat F^{(\F_n)}, 1\leq i\leq n\rangle}.
   \end{equation*}
   Due to Remark \ref{Rem: inv/coinv} and Lemma \ref{Lem: repres},  $\mathrm{K}_0^{\Gamma}(\uE \Gamma)$ is a free abelian group on the orbit space of the action $\F_n \curvearrowright \Z \hat{F}^{(\F_n)}$ with the described basis. 
      
   For computing $\mathrm{K}_1^{\Gamma}(\uE{\Gamma})$, we appeal to Lemma \ref{Lem: Kernel}.  In view of that lemma  for $X= \hat F^{(\F_n)}$ and $\mathfrak p= 1$, the trivial representation of $F$, we may describe the kernel  as the fixed points of the action. More precisely, 
   \begin{equation*}
   \mathrm{K}_1^{\Gamma}(\uE{\Gamma})\cong \mathrm {H}_1^{\mathfrak{F}}(\Gamma; R_{\C}) =
   \operatorname{Ker(\delta)}= \underset{n\,\,\,\, terms}{ \underbrace{\Z \bold1_1\oplus\ldots\oplus \Z \bold1_1}}.
   \end{equation*}
 In order to identify $\bold1_1$ in $i$-th copy with $[v_i]$, we need to make some observations. As the groups $\Z$ and $\F_n$ are torsion-free, we have that 
 \begin{equation*} 
 \mathrm K_1^{\Z}(\uE \Z)\cong \mathrm K_i(\mathrm B\Z)\cong \mathrm K_1(\mathrm S^1)\quad \text{and} \quad
 \mathrm K_1^{\F_n}(\uE \F_n)\cong \mathrm K_i(\mathrm B\F_n)\cong \mathrm K_1(\bigvee _n \mathrm S^1),
 \end{equation*}
 where $\mathrm BG$ stands for the classifying space and $\bigvee_n \mathrm S^1$ denotes the wedge  of $n$-circles.
  We recall that $\mathrm H_1(\F_n)=\Z \oplus\ldots\oplus\Z$ ($n$-times), and by Theorem \ref{Thm: Misilin 2, dim EG < 2}, $\mathrm K_1^{\F_n}(\uE \F_n) \cong H_1(\F_n; \Z)$. Moreover, due to functionality of  $\mathrm K$-theory and homology theory we have that
  \begin{align*}
  	\mathrm S^1\hookrightarrow \bigvee _n \mathrm S^1 \quad
  	 &\text{induces} \quad
  	\mathrm K_1(\mathrm S^1) \hookrightarrow \mathrm K_1(\bigvee_n S^1),\\
  	 \Z \ra \Z \hat F^{(\F_n)}
  	\colon \,\,
  	1\mapsto \bold1_1
  	 \quad&\text{induces} \quad
  	\mathrm H_1(\F_n; \Z)\ra \mathrm H_1(\F_n; \Z \hat F^{(\F_n)}). 
  \end{align*}
  Now consider the following composition
  \begin{equation*}
  	\xymatrix{
  		\mathrm K_1^{\Z}(\uE \Z) \ar[r]^{\cong} &
		\mathrm K_1(\mathrm S^1) \ar[r] &
		\mathrm K_1(\bigvee_n S^1)\ar[r]^{\cong} &
		\mathrm H_1(\F_n; \Z)  \ar[r] & 
		\mathrm H_1(\F_n; \Z \hat F^{(\F_n)})\ar[r]^{\,\,\,\,\,\,\cong} &
		\mathrm K_1^{\Gamma}(\uE\Gamma).
  }
  \end{equation*}
Choose $a_i\in \F_n$, and consider the identification $\langle{a_i}\rangle\cong\Z$. By Theorem \ref{Thm: Misislin 1, dim EG=1} we have that $\mathrm K_1^{\Z}(\uE \Z)\cong \mathrm H_1(\Z ;\Z)= \Z [v_i]$. According to the composition above, we can see that $[v_i] \mapsto (0, \ldots, 0,\bold1_1, 0, \ldots, 0)\in \mathrm K_1^{\Gamma}(\uE\Gamma)$. This finishes the proof.
  \end{proof}
\section{The Baum-Connes assembly map for $F\wr\F_n$} \label{thm: BC for F, F_n}

In this section, we explicitly describe the Baum-Connes assembly map for  $F \wr \F_n$, where $F$  is a non-trivial finite group.
\begin{Thm} \label{Thm: B-C for free by finite}
Let $F$ be a non-trivial finite group. 
The Baum-Connes assembly map for $\Gamma= F\wr \F_n$ can be described as: 
\begin{description}
	\item The assembly map
	 $\mu_0^{\Gamma}\colon \mathrm K_0^\Gamma(\uE \Gamma) \rightarrow \mathrm K_0(\mathrm C_{\mathrm r}^*(\Gamma)) $
	  is an isomorphism between two  countably generated free abelian groups.
	\item The assembly map
	 $\mu_1^{\Gamma}\colon \mathrm K_1^{\Gamma}(\uE \Gamma) \rightarrow \mathrm K_1(\mathrm C_{\mathrm r}^*(\Gamma)) $ is an isomorphism between two free abelian groups of rank n.
\end{description}	
\end{Thm}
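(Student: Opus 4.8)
The plan is to combine the explicit descriptions of both sides obtained in Theorem \ref{Thm: RHS} and Theorem \ref{Thm: LHS} with the naturality of the assembly map with respect to the inclusions $B \hookrightarrow \Gamma$ and $\langle a_i\rangle \hookrightarrow \Gamma$. Since $\Gamma = F\wr\F_n$ is a-T-menable, the Higson--Kasparov theorem \cite{HK01} already guarantees that $\mu_i^\Gamma$ is an isomorphism for $i=0,1$; the content of the theorem is therefore not the bijectivity itself but the \emph{explicit} identification of the map on the bases exhibited earlier. So the real goal is to show that $\mu_i^\Gamma$ carries the distinguished basis of $\mathrm K_i^\Gamma(\uE\Gamma)$ to the distinguished basis of $\mathrm K_i(\mathrm C^*_{\mathrm r}(\Gamma))$.

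First I would treat $\mu_1^\Gamma$, which is the cleaner case. By Theorem \ref{Thm: LHS} the domain is $\Z[v_1]\oplus\cdots\oplus\Z[v_n]$, where $[v_i]$ is the image of the canonical generator of $\mathrm K_1^{\Z}(\uE\Z)$ under the inclusion $\langle a_i\rangle\hookrightarrow\Gamma$; by Theorem \ref{Thm: RHS} the codomain is $\Z[u_1]\oplus\cdots\oplus\Z[u_n]$. Naturality of the assembly map applied to $\langle a_i\rangle = \Z \hookrightarrow \Gamma$ gives a commuting square relating $\mu_1^{\Z}$ and $\mu_1^{\Gamma}$; since $\mu_1^{\Z}\colon \mathrm K_1^{\Z}(\uE\Z)\to\mathrm K_1(\mathrm C^*_{\mathrm r}(\Z))=\mathrm K_1(\mathrm C(\mathrm S^1))$ sends the canonical generator $[v_i]$ to the class of the generating unitary (this is the classical computation for $\Z$), and since this unitary is precisely $u_i$ after the inclusion into $\mathrm C^*_{\mathrm r}(\Gamma)$, we conclude $\mu_1^\Gamma([v_i]) = [u_i]$. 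Thus $\mu_1^\Gamma$ matches the two rank-$n$ bases and is an isomorphism.

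Next I would treat $\mu_0^\Gamma$. By Theorem \ref{Thm: RHS} and Theorem \ref{Thm: LHS} both sides are free abelian of countable rank, with bases indexed respectively by $\F_n$-orbit representatives in $\operatorname{Min}F^{(\F_n)}$ and in $\hat F^{(\F_n)}$. Here I would use naturality with respect to $B=\bigoplus_{\F_n}F \hookrightarrow \Gamma$ together with Proposition \ref{Prop: K0locfin}, which already describes $\mu_B$ explicitly: on $B$ the assembly map sends $\pi=(\pi_w)_w\in\hat F^{(\F_n)}$ to $\otimes_w \mu_F(\pi_w)\in\Ko(\mathrm C^*B)$, i.e.\ it is, up to the identifications $\KoB(\uE B)\cong\Z\hat F^{(\F_n)}$ and $\Ko(\mathrm C^*B)\cong\Z\operatorname{Min}F^{(\F_n)}$, induced by the bijection $\mu_F\colon\hat F\to\operatorname{Min}F$. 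One checks that this map is $\F_n$-equivariant, so it descends to the quotients by the $\F_n$-action, and by the commuting square (from naturality for $B\hookrightarrow\Gamma$) together with the fact that both $\mathrm K_0^\Gamma(\uE\Gamma)$ and $\mathrm K_0(\mathrm C^*_{\mathrm r}\Gamma)$ were computed precisely as these co-invariant quotients (via Remark \ref{Rem: inv/coinv} and Lemma \ref{Lem: repres}), the map $\mu_0^\Gamma$ is identified with the induced bijection on $\F_n$-orbit representatives $R'\to R$. Hence $\mu_0^\Gamma$ sends basis to basis and is an isomorphism.

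The main obstacle, and the step requiring the most care, is verifying that the commuting squares coming from naturality actually reduce the computation of $\mu_i^\Gamma$ to the already-understood maps $\mu_B$ and $\mu_{\Z}$ \emph{on the level of bases} — in other words, that the identifications of $\mathrm K_i^\Gamma(\uE\Gamma)$ and $\mathrm K_i(\mathrm C^*_{\mathrm r}\Gamma)$ as quotients (for $i=0$) or subgroups (for $i=1$) of the corresponding $B$- or $\Z$-level groups are compatible with the Pimsner--Voiculescu / Mart\'inez machinery used to build them. Concretely, one must check that the boundary-map identification $\partial_1([u_i])=-[1]$ in the $i$-th slot from Proposition \ref{Prop: K1, generator} on the analytic side matches the description of $[v_i]$ via the composition $\mathrm K_1^{\Z}(\uE\Z)\to\cdots\to\mathrm K_1^\Gamma(\uE\Gamma)$ at the end of the proof of Theorem \ref{Thm: LHS} on the topological side, and similarly that the passage $\operatorname{Im}(\sigma_*)$ versus $\operatorname{Im}(\delta)$ is the same subgroup under $\mu_F$. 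Once these compatibilities are in place the statement follows formally, so I expect this bookkeeping — rather than any deep new input — to be where the work lies.
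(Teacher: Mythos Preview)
Your proposal is correct and follows essentially the same strategy as the paper: naturality of the assembly map with respect to $B\hookrightarrow\Gamma$ for $\mu_0^\Gamma$ and with respect to $\langle a_i\rangle\hookrightarrow\Gamma$ for $\mu_1^\Gamma$, reducing to the known maps $\mu_0^B$ and $\mu_1^{\Z}$. The only cosmetic difference is that for $\mu_0^\Gamma$ the paper packages the argument as a Five Lemma application to the commuting ladder
\[
\xymatrix{
\bigoplus_{i=1}^n\KoB(\uE B)\ar[r]^{\sigma_*}\ar[d]_{\oplus\mu_0^B} & \KoB(\uE B)\ar[r]^{\iota_*}\ar[d]_{\mu_0^B} & \KoL(\uE\Gamma)\ar[r]\ar[d]_{\mu_0^\Gamma} & 0\\
\bigoplus_{i=1}^n\Ko(\mathrm C^*B)\ar[r]^{\sigma_*} & \Ko(\mathrm C^*B)\ar[r]^{\iota_*} & \Ko(\mathrm C^*_{\mathrm r}\Gamma)\ar[r] & 0,
}
\]
whereas you unpack this by hand as ``$\mu_B$ is $\F_n$-equivariant, hence descends to co-invariants''; these are the same argument, and your version has the slight advantage of making the basis-to-basis correspondence $R'\to R$ explicit rather than leaving it implicit in the Five Lemma.
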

\begin{proof}
	
   \begin{description}
   		
          \item [\it{ $\mu_0^{\Gamma}$ is an isomorphism.}]
           Let $\iota\colon B  \hookrightarrow \Gamma$ be an inclusion.  Consider the following diagram
              \begin{displaymath}
                 	\xymatrix
                        	  {
                            	\bigoplus_{i=1}^n\KoB(\uE B)\ar[r]^{\operatorname{ \sigma_*}} \ar[d]_{\bigoplus_{i=1}^n \mu_0^B}
                             	& \KoB(\uE B)\ar[r]^{\iota_*}\ar[d]_{\mu_0^B}
                             	& \KoL(\uE \Gamma)\ar[r] \ar[d]_{\mu_0^{\Gamma}}
                            	& 0
                            	\\
                             	\bigoplus_{i=1}^n\Ko(\mathrm C^*(B))\ar[r]^{\operatorname{ \sigma_*}}
                            	&\Ko(\mathrm C^*(B))\ar[r]^{\iota_*}
                                &\Ko(\mathrm C^*_{\mathrm{r}}\Gamma)\ar[r]
                                & 0.
                             }
           \end{displaymath}
               Theorem \ref{Thm: RHS} together with Theorem \ref{Thm: LHS} imply that top and bottom sequences are exact. Moreover,  functoriality of the assembly map (Corollary II.1.3 in \cite{MV03}) yields the commutativity of the whole diagram. By  Proposition \ref{Prop: K0locfin}, $\mu_0^B$  and  hence $\bigoplus_{i=1}^n\mu_0^B$ are isomorphisms. The Five Lemma then implies that $\mu_0^\Gamma$ is an isomorphism.
   \item [\it{ $\mu_1^\Gamma$ is an isomorphism.}] 
Consider the comparison diagram
\begin{displaymath}
\xymatrix
{
	\KlL(\uE \Gamma)\ar[r]^{\mu_1^{\Gamma}}\ar[d]
	&\Kl(\mathrm C^*_{\mathrm{r}}(\Gamma))\ar[d]
	\\
	\KlFn(\uE\F_n)\ar[d]\ar[r]^{\mu_1^{\F_n}}
	&\Kl(\mathrm C^*_{\mathrm{r}}(\F_n))\ar[d]
	\\
	\KlZ(\uE \Z)\ar[r]^{\mu_1^{\Z}}
	&\Kl(\mathrm C^*(\Z)).
}
\end{displaymath}
Due to Theorem \ref{Thm: RHS} and Theorem \ref{Thm: LHS}, we know that
 $\KlL(\uE\Gamma)\cong\Kl(\mathrm C^*_{\mathrm{r}}(\Gamma))\cong\Z^n$ and 
 $\KlL(\uE\Gamma)= \KlFn(\uE\F_n) \cong \bigoplus_{i=1}^n \KlZ(\uE \Z) $. For $i=1, \ldots, n$, take the generator $[v_i]$ of the $i$-th summand in $\KlL(\uE\Gamma)$. Write 
 ${\langle {a_i}\rangle} \cong \Z$. The $n$-tuple
  $(0, \dots,0,[v_i], 0, \ldots, 0)\in \KlL(\uE\Gamma)$ maps to $[v_i]\in \KlZ(\uE \Z)$.              	
  By the explicit description in [\cite{MV03}, Section II.2.4],  we know that the assembly map $\mu_1^{\Z}$ transfers the generator on one side to the other. Moreover, by functoriality of K-theory we have that $\Kl(\mathrm C^*_{\mathrm{r}}(\F_n)) \cong \bigoplus_{i=1}^n \Kl(\mathrm C^*(\Z))$. Therefore,  $(0, \dots,0,[v_i], 0, \ldots, 0)$ maps to $(0, \dots,0,[u_i], 0, \ldots, 0)$.  
      \end{description} 
\end{proof}	
\section{Trace}
 We close the article by a remark on the image of the induced (canonical) trace  on $\Ko(\mathrm{C}^*_{\mathrm r}(\Gamma))$, that is  $\operatornamewithlimits{Im}\tau_*(\Ko(\mathrm{C}^*_{\mathrm r}(\Gamma)))$. This  is relevant in the context of the modified Trace conjecture formulated by L\"uck in \cite{Lue02}.

Let $\tau \colon \mathrm C^*_{\mathrm{r}}(G) \ra \C$
 be the canonical trace on $\mathrm C ^*_{\mathrm{r}}(G)$. Concerning the (modified) Trace conjecture, it is predicted that for the induced homomorphism   
$\tau_* \colon \Ko(\mathrm C^*_{\mathrm{r}}(G)) \ra \R$  we have that $$\operatorname{Im}\tau_*\subset \Z \big[ \{\frac{1}{|H|}\colon  H \leq G, \big|H|< \infty\}\big]\stackrel{\text{subring}}\subset \mathbb Q.$$ 
In the case of $\Gamma = F\wr \F_n$, thanks to the surjectivity of
 $\iota_*\colon \Ko(\mathrm C^*(B)) \rightarrow \Ko(\mathrm C^*_{\mathrm r} (\Gamma))$,  we only need to consider $\operatorname{Im} \tau_*(\mathrm C^*(B))=\operatorname{Im} \tau_*(\Z (\mathrm {Min} F^{(\mathbb F_n)}))$. Therefore the computations in Proposition 5 in \cite{FPV16} implies the predicted result
  $$\operatorname{Im} \tau_*(\Ko(\mathrm C^*_{\mathrm{r}}\Gamma)) = \Z \big[\frac{1}{|F|}\big].$$



\hspace{15pt}
{\small \parbox[t]{200pt}
	{
		Sanaz Pooya \\
		Institut de Math\'{e}matiques\\
		 Universit\'{e} de Neuch\^{a}tel\\
	     Switzerland \\
		{\footnotesize sanaz.pooya@unine.ch}
	}
}


\begin{thebibliography}{Bro82}


	
\bibitem{BCoh82} K.S.{ \sc Brown}
 \newblock{\em Cohomology of groups}
 \newblock{ Springer, 1982}
	
\bibitem{CSV12}	Y. {\sc Cornulier}, Y. {\sc Stalder} and A.{\sc Valette}
\newblock {\em  Proper actions of wreath products and generalizations}
\newblock {Trans. Amer. Math. Soc. 364 (2012), 3159-3184}


\bibitem{FPV16} R. {\sc Flores}, S. {\sc Pooya} and A. {\sc Valette}
\newblock{\em K-theory and K-homology for the lamplighter groups of finite groups}
\newblock  {arXiv:1610.02798, to appear in Proc. London Math. Soc.}	

\bibitem{F11}  M. {\sc Fluch}
\newblock{\em On Bredon (co-)homological dimensions of groups}
\newblock{PhD Thesis, 2011. http://www.fluch.pl/docs/phdthesis.pdf}

\bibitem{HK01} N. {\sc Higson} and G. {\sc Kasparov} 
\newblock{\em E-theory and KK-theory for groups which act properly and isometrically on Hilbert space}
\newblock{Invent. Math. 144 (2001), no. 1, 23–74}


\bibitem{Lue02} W. {\sc L\"ueck} 
\newblock{\em The relation between the Baum-Connes conjecture and the trace conjecture}
\newblock{Invent. Math. 149 (2002), 123-152}


\bibitem{Mart02} C. {\sc Mart\'{i}nez} 
\newblock {\em A spectral sequence in Bredon co(homology)}
\newblock {J. Pure Appl. Alg., 176 (2002), 161-173}

\bibitem{MV03} G. {\sc Mislin} and A. {\sc Valette}
\newblock {\em Proper group actions and the Baum-Connes conjecture},
\newblock Advanced Courses in Mathematics - CRM Barcelona, Birkh\"auser, 2003

\bibitem{PV80} M. {\sc Pimsner} and D. {\sc Voiculescu}
 \newblock{\em Exact sequences for K-groups and Ext- groups of certain crossed products C$^*$-algebras}
 \newblock {J. Operator theory 4 (1980), 93-118}

\bibitem{PV82} M. {\sc Pimsner} and  D. {\sc Voiculescu} 
 \newblock {\em K-groups of reduced crossed products by free groups}
 \newblock {J. Operator theory 8 (1982), 131-156}

\bibitem{PV16} S. {\sc Pooya} and A. {\sc Valette}
\newblock {\em K-theory for the C$^*$-algebras of the solvable Baumslag-Solitar groups}
\newblock {arXiv:1604.05607}

\end{thebibliography}
\end{document}